\theoremstyle{plain}
\newtheorem{theorem}{Theorem}[section]
\newtheorem{lemma}[theorem]{Lemma}
\newtheorem{proposition}[theorem]{Proposition}
\newtheorem{corollary}[theorem]{Corollary}
\theoremstyle{remark}
\newtheorem{remark}[theorem]{Remark}
\newtheorem{example}[theorem]{Example}
\theoremstyle{definition}
\newtheorem{definition}[theorem]{Definition}
\newcommand{\abs}[1]{|{#1}|}
\newcommand{\norm}[2][]{\|{#2}\|_{#1}}
\newcommand{\inner}[3][]{\langle{#2},{#3}\rangle_{#1}}
\newcommand{\set}[2]{\{ {#1}\ :\ {#2} \}}
\newcommand{\wstarto}{\overset{\ast}{\rightharpoonup}}
\newcommand{\conv}[1]{\ensuremath{\overset{#1}{\rightarrow}}}
\newcommand{\placeholder}{\,\cdot\,}
\newcommand{\RR}{\mathbb{R}}
\newcommand{\NN}{\mathbb{N}}
\newcommand{\topin}{\ensuremath{\tau_{IN}}}
\newcommand{\ball}{\ensuremath{\mathcal{B}}}
\newcommand{\seqstruc}{{\mathcal{S}}}
\DeclareMathOperator*{\argmin}{argmin}
\DeclareMathOperator{\dom}{dom}
\DeclareMathOperator{\id}{id}
\DeclareMathOperator{\graph}{gr}
\DeclareMathOperator{\range}{range}
\DeclareMathOperator{\spann}{span}
\begin{document}
\title[Variational regularization]{Necessary conditions for variational regularization schemes}
\author{Dirk Lorenz$^1$ and Nadja Worliczek$^1$}
\address{$^1$ Institute for Analysis and Algebra, TU Braunschweig, 38092 Braunschweig, Germany}
\eads{\mailto{d.lorenz@tu-braunschweig.de}, \mailto{n.worliczek@tu-braunschweig.de}}

\begin{abstract}
  We study variational regularization methods in a general framework,
  more precisely those methods that use a discrepancy and a regularization
  functional. While several sets of sufficient conditions are known to
  obtain a regularization method, we start with an investigation of
  the converse question: How could necessary conditions for a
  variational method to provide a regularization method look like? To this end,
  we formalize the notion of a variational scheme and start with comparison
  of  three
  different instances of variational methods. Then we focus on the
  data space model and investigate the role and interplay of the
  topological structure, the convergence notion and the discrepancy
  functional. Especially, we deduce necessary conditions for the
  discrepancy functional to fulfill usual continuity assumptions.
  The results are applied to discrepancy functionals given by Bregman
  distances and especially to the Kullback-Leibler divergence. 
\end{abstract}

\ams{49N45, 54A10, 54A20}


\section{Introduction}
By ``variational regularization'' we mean every method that is used to
approximate an ill-posed problem by well-posed minimization problems.
We start with a mapping $F:X\to Y$ between two sets $X$ and $Y$ and
equations
\[
F(x)=y.
\]
A common problem with inverse problems is that of instability,
i.e.~that arbitrary small disturbances in the right hand side $y$
(e.g.~by replacing a ``correct'' $y$ in the range of $F$ with one in
an arbitrarily small neighborhood) may lead to unwanted effects such as
that no solution exists anymore or that solutions with perturbed
right hand side differ arbitrarily from the true solutions. In
topological spaces $X$ and $Y$ we can formulate the problem of
instability more precisely: The equation
$F(x^{\text{exact}})=y^{\text{exact}}$ is unstable, if there exists a
neighborhood $\mathcal{U}$ of $x^{\text{exact}}$ such that for all
neighborhoods $\mathcal{V}$ of $y^{\text{exact}}$ there exists
$y^\delta\in\mathcal{V}$ such that
$F^{-1}(y^\delta)\cap\mathcal{U}=\emptyset$ (cf.~\cite{ivanov1969topological,kabanikhin2011inverseproblems}).

Variational regularization methods replace the equation
$F(x)=y$ by a minimization problem for an (extended) real valued
functional such that the minimizers are suitable approximate solutions
of the equation. The most widely used variational method is Tikhonov
regularization~\cite{tikhonov1963regularization}, but other methods
are used as well. Starting from a detailed analysis of this method in
Hilbert spaces, there are several recent studies on Tikhonov
regularization in the context of more general spaces like Banach
spaces~\cite{resmerita2005regbanspaces,resmerita2006nonquadreg,hofmann2007convtikban,scherzer2009variationalmethods} or
even topological spaces~\cite{poeschl2008diss,flemming2010nonmetric,flemming2011diss,grasmair2012multiparameter,werner2012tikhonovpoisson}.
Especially, the discrepancy functional, which measures the distance
between the measured data and the reconstructed data, have come into
the focus of recent research: A Poisson noise model motivates the use
of a Kullback-Leibler divergence and is applied in fluorescence
microscopy and optical/infrared
astronomy~\cite{bertero2009poissondeblurring}, for inverse
scattering problems and phase retrieval
problems~\cite{hohage2013irgnpoisson} and for STED- and 4Pi-microscopy
and positron emission tomography~\cite{brune2013fbemtv}. Moreover,
a kind of Burg entropy is used for multiplicative noise which has
applications to remove speckle noise in synthetic aperture radar
imaging~\cite{aubert2008multiplicativenoise}.
By now, a quite general set of sufficient assumptions is available under which
Tikhonov regularization has the desired regularizing properties,
i.e.~stable solvability of the minimization problems and suitable
approximation of the true solution if the noise vanishes.
These sufficient assumptions are helpful to
check if a chosen setting for variational regularization is indeed
suited. On the other hand, when designing a regularization method it would be
helpful to know in advance which setting works and which is not going
to work. Hence, in this paper we begin with a study of the converse
analysis and aim at providing \emph{necessary conditions} on
variational methods such that regularization is achieved. Such conditions
would also be helpful in designing new variational methods as they
rule out several options. Moreover, necessary conditions are a
further step towards the understanding of the nature of variational
regularization.

We remark that we are aware that necessary conditions can not be
expected to be very strong (as an example, a minimization problem can
be changed quite arbitrary without changing
the minimizer itself). However, there are already a few results of
this flavor known in specialized contexts which we list here:
\begin{theorem}[No uniform bounded linear regularization, \protect{\cite[Remark 3.5]{engl1996inverseproblems}}]
  \label{thm:no-bounded-reg}
  If the linear and bounded operator $F:X\to Y$ between Hilbert spaces
  $X$ and $Y$ does
  not have closed range and $(L_\alpha)_{\alpha>0}$ is a family of
  linear and bounded operators from $Y$ to $X$ such that for all $x\in
  X$ it holds that $L_\alpha Fx$ converges to $x$ for $\alpha\to 0$,
  then $(\norm{L_\alpha})$ is unbounded.
\end{theorem}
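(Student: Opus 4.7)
I would proceed by contradiction: assume that $(\|L_\alpha\|)_{\alpha>0}$ is bounded by some constant $C>0$ and show that this forces $\range(F)$ to be closed, contradicting the hypothesis on $F$.

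The central step is to extract a lower bound of the form $\|x\| \le C \|Fx\|$ for every $x \in X$. This falls out almost for free: norm continuity applied to the hypothesis $L_\alpha F x \to x$ yields $\|L_\alpha F x\| \to \|x\|$, while boundedness of each $L_\alpha$ gives the uniform upper estimate $\|L_\alpha F x\| \le \|L_\alpha\|\,\|F x\| \le C \|F x\|$. Letting $\alpha \to 0$ combines these into the desired inequality. Note that injectivity of $F$ is automatic, either from the inequality or directly from $L_\alpha F x \to x$: if $Fx_1 = Fx_2$, then applying $L_\alpha$ and passing to the limit forces $x_1 = x_2$.

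The second step is then routine: the inequality $\|x\| \le C \|F x\|$ says that $F$ is bounded below, which for a bounded operator between complete spaces is equivalent to having closed range. Concretely, if $y_n = F x_n \to y$ in $Y$, the inequality shows $(x_n)$ is Cauchy in $X$, hence convergent to some $x^\ast$; continuity of $F$ then yields $F x^\ast = y \in \range(F)$. This contradicts the assumption that $\range(F)$ is not closed.

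The hardest part, if there is one, is a conceptual one: seeing that no extension of the $L_\alpha$'s to a limit operator on $\overline{\range(F)}$ (via a uniform-boundedness/density argument) is needed. The lower bound on $F$ can be read off directly from the pointwise norm convergence, and the rest is the standard equivalence of ``bounded below'' and ``closed range'' for bounded linear operators on Banach spaces. In particular, the Hilbert space structure is used only through completeness of $X$, so the same proof would deliver the analogous statement in the Banach space setting.
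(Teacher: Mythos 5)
Your proof is correct. Note first that the paper does not prove this theorem at all: it is quoted from \cite[Remark~3.5]{engl1996inverseproblems} as a known example of a necessary condition, so there is no in-paper argument to compare against. Your argument is a clean, self-contained verification. The key inequality $\|x\|=\lim_{\alpha\to 0}\|L_\alpha Fx\|\le C\|Fx\|$ is obtained exactly as you say, and the equivalence of ``bounded below'' and ``closed range plus injective'' for a bounded operator into a complete space finishes the contradiction; the observation that non-injectivity of $F$ is already incompatible with the hypothesis $L_\alpha Fx\to x$ is also right. It is worth noting that the textbook route in the cited reference is slightly different in flavour: there the regularization property is phrased as $L_\alpha y\to F^\dagger y$ on $\dom(F^\dagger)$, and boundedness of $\|L_\alpha\|$ is used (via a density/limit argument) to conclude that $F^\dagger$ is bounded, which is then known to be equivalent to $\range(F)$ being closed. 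For the statement as formulated in this paper, with convergence required only on $\range(F)=F(X)$ rather than on all of $\dom(F^\dagger)$, your direct lower-bound argument is the more economical one and, as you observe, it never uses the inner product, so it proves the Banach space version as well.
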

In other words, linear regularization methods are necessarily not
uniformly bounded.

The next example of a necessary condition deals with the problem of
parameter choice. We need the Moore-Penrose pseudo-inverse $F^\dag$ of
a bounded linear mapping between Hilbert spaces,
cf.~\cite{benisrael2003geninverse}.
\begin{theorem}[Bakushinskii Veto, \protect{\cite{bakushinksii1984nonregularization}}]
  \label{thm:bakushinskii}
  Let $F:X\to Y$ be a bounded linear operator between Hilbert spaces
  and $(L_\alpha)_{\alpha>0}$ be a family of continuous mappings from
  $Y$ to $X$. If there is a mapping $\alpha:Y\to{]}0,\infty{[}$ such
  that
  \[
  \limsup_{\delta\to 0}\set{\norm{L_{\alpha(y^\delta)}y^\delta -
      F^\dag y}}{y^\delta\in Y,\ \norm{y-y^\delta}\leq\delta}=0
  \]
  then $F^\dag$ is bounded.
\end{theorem}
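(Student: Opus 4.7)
My plan is to convert the pointwise limsup hypothesis into the continuity of a single mapping and then use linearity of $F^\dag$ to upgrade continuity at the origin to operator-norm boundedness. Concretely, I would first define the ``candidate reconstruction'' $R:Y\to X$ by $R(z):=L_{\alpha(z)}(z)$. Applying the hypothesis with the constant choice $y^\delta\equiv y$ and $\delta\to 0$ yields $\norm{R(y)-F^\dag y}=0$, so $R$ and $F^\dag$ agree on $\dom(F^\dag)$. For general sequences $y^\delta\to y$ (with $y\in\dom(F^\dag)$ and $\norm{y-y^\delta}\leq\delta$), the limsup condition says precisely $R(y^\delta)\to F^\dag y=R(y)$, i.e.\ $R$ is continuous at every $y\in\dom(F^\dag)$.

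The next step is to specialize to $y=0$. Since $F$ is linear, $0\in\dom(F^\dag)$ and $F^\dag 0=0$, so $R(0)=0$ and $R$ is continuous at $0$. Hence for every $\varepsilon>0$ there exists $\eta>0$ such that $\norm{z}\leq\eta$ implies $\norm{R(z)}\leq\varepsilon$. Restricting to $z\in\dom(F^\dag)$ with $\norm{z}\leq\eta$ gives $\norm{F^\dag z}=\norm{R(z)}\leq\varepsilon$. Using that $F^\dag$ is linear on its domain, a standard scaling argument ($z\mapsto \eta z/\norm{z}$ for arbitrary $z\in\dom(F^\dag)\setminus\{0\}$) turns this local bound into $\norm{F^\dag z}\leq (\varepsilon/\eta)\norm{z}$ for all $z\in\dom(F^\dag)$, which is exactly boundedness of $F^\dag$.

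The main subtlety, and what I would flag as the only place where care is needed, is that the mapping $\alpha$ is \emph{not} assumed continuous. One might worry that this prevents $R$ from being continuous at all, but the point of the argument is that continuity of $R$ is forced at the relevant points purely by the convergence hypothesis — no continuity of $\alpha$ or of the family $\seq[\alpha>0]{L_\alpha}$ with respect to $\alpha$ is needed. A secondary point to be careful with is that $\dom(F^\dag)=\range(F)\oplus\range(F)^\perp$ need not be closed; however, the whole argument only uses that $\dom(F^\dag)$ is a linear subspace containing $0$, and the conclusion ``$F^\dag$ is bounded'' is understood as boundedness of $F^\dag:\dom(F^\dag)\to X$, so no closedness or extension issue has to be addressed. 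With these observations in place, the proof reduces to the three lines indicated above.
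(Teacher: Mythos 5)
The paper states the Bakushinskii veto only as a quoted result from the literature and gives no proof of its own, so there is nothing internal to compare against; your argument is correct and is essentially the classical proof: set $R(z)=L_{\alpha(z)}z$, observe that the hypothesis forces $R=F^\dag$ on $\dom(F^\dag)$ and forces $R$ to be continuous at each point of $\dom(F^\dag)$, and then upgrade continuity at $0$ to boundedness via linearity of $F^\dag$ on its domain. The one reading you need --- and correctly make --- is that the displayed limsup condition is understood to hold for every $y\in\dom(F^\dag)$, since otherwise $F^\dag y$ is not even defined; with that, all steps (including the scaling argument on the subspace $\dom(F^\dag)=\range(F)\oplus\range(F)^\perp$) go through.
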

In other words, parameter choice rules which are regularizing (in the sense of~\cite[Def.~3.1]{engl1996inverseproblems}) for ill-posed problems (i.e. unbounded $F^\dag$)
necessarily need to use the noise level.

An example for a-priori parameter choice rules was proven by Engl:
\begin{theorem}[Decay conditions for a-priori parameter choice rules
  for linear methods,~\protect{\cite[Prop.~3.7]{engl1996inverseproblems}
  and\cite{engl1981necessary})}] Let $F$ and $(L_\alpha)$ be as in
  Theorem~\ref{thm:no-bounded-reg}, and
  $\alpha:{]}0,\infty{[}\to{]}0,\infty{[} $ be an a-priori parameter
  choice rule. Then it holds that
  \[
  \limsup_{\delta\to 0}\set{\|L_{\alpha(\delta)} y^\delta - F^\dag
    y\|}{y^\delta\in Y,\ \norm{y-y^\delta}\leq\delta}=0
  \]
  if and only if
  \[
  \lim_{\delta\to 0}\alpha(\delta) =
  0\quad\text{and}\quad\lim_{\delta\to
    0}\delta\norm{L_{\alpha(\delta)}} = 0.
  \]
\end{theorem}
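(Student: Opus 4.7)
Write $e(\delta):=\sup\{\|L_{\alpha(\delta)}y^\delta-F^\dagger y\|:y^\delta\in Y,\ \|y-y^\delta\|\le\delta\}$ and establish the equivalence in two directions.

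For the sufficiency direction I would use the standard error splitting: for any admissible $y^\delta$, the triangle inequality gives
\[
\|L_{\alpha(\delta)}y^\delta-F^\dagger y\| \;\le\; \|L_{\alpha(\delta)}\|\,\delta \;+\; \|L_{\alpha(\delta)}y-F^\dagger y\|.
\]
The first summand is $o(1)$ uniformly in $y^\delta$ by the hypothesis $\delta\|L_{\alpha(\delta)}\|\to 0$. The second is the noise-free approximation error: setting $x:=F^\dagger y$, the standing hypothesis $L_\alpha Fx\to x$ as $\alpha\to 0$ combined with $\alpha(\delta)\to 0$ yields $L_{\alpha(\delta)}FF^\dagger y\to F^\dagger y$, which coincides with $L_{\alpha(\delta)}y\to F^\dagger y$ for $y\in\range F$ (and reduces to this case on $\dom F^\dagger$ via the orthogonal decomposition, since the $(\range F)^\perp$-component lies in $\ker F^\dagger$). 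Taking the supremum then gives $e(\delta)\to 0$.

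For the necessity direction I would proceed in two sub-steps. First, inserting the noise-free choice $y^\delta=y$ into the supremum immediately forces $L_{\alpha(\delta)}y\to F^\dagger y$. Second, to deduce $\delta\|L_{\alpha(\delta)}\|\to 0$, argue by contradiction: assume $\delta_n\|L_{\alpha(\delta_n)}\|\ge 2c>0$ along some $\delta_n\to 0$, select unit vectors $v_n$ with $\|L_{\alpha(\delta_n)}v_n\|\ge\tfrac{1}{2}\|L_{\alpha(\delta_n)}\|$ (by definition of the operator norm) and test the supremum against the admissible perturbation $y^{\delta_n}:=y+\delta_n v_n$. The reverse triangle inequality then yields
\[
\|L_{\alpha(\delta_n)}y^{\delta_n}-F^\dagger y\| \;\ge\; \delta_n\|L_{\alpha(\delta_n)}v_n\| - \|L_{\alpha(\delta_n)}y-F^\dagger y\| \;\ge\; c - o(1),
\]
contradicting $e(\delta_n)\to 0$.

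The main obstacle is the remaining claim $\alpha(\delta)\to 0$. I would again argue by contradiction: assume $\alpha(\delta_n)\ge\alpha_0>0$ along a subsequence. Since the noise-free step gives $L_{\alpha(\delta_n)}y\to F^\dagger y$ pointwise on the dense subspace $\dom F^\dagger$, and since $F$ has non-closed range (so $F^\dagger$ is unbounded and cannot be approximated in operator norm by bounded operators), the Banach--Steinhaus theorem forces $\|L_{\alpha(\delta_n)}\|$ to be unbounded. Translating this unboundedness \emph{along a sequence of parameters bounded away from $0$} into an actual contradiction is the delicate part: it requires an additional structural property of the family $(L_\alpha)$---essentially a localisation of the blow-up of $\|L_\alpha\|$ to a neighbourhood of $\alpha=0$, built into the very notion of a linear regularization family via Theorem~\ref{thm:no-bounded-reg} and the pointwise approximation $L_\alpha\to F^\dagger$ as $\alpha\to 0$. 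Everything else in the argument is a triangle-inequality manipulation; this last linkage is where I expect the proof to become technical.
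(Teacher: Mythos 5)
This theorem is one of the motivating results the paper merely quotes from the literature (Prop.~3.7 of \cite{engl1996inverseproblems} and the Engl 1981 paper); the paper contains no proof of it, so there is no in-text argument to compare yours against. Judged on its own, your sufficiency direction and your proof that $\delta\|L_{\alpha(\delta)}\|\to 0$ is necessary are the standard arguments and are sound: the splitting $\|L_{\alpha(\delta)}y^\delta-F^\dagger y\|\le\delta\|L_{\alpha(\delta)}\|+\|L_{\alpha(\delta)}y-F^\dagger y\|$, and the test perturbations $y+\delta_n v_n$ with near-norming unit vectors $v_n$, are exactly how this is done. One caveat on sufficiency: from $L_\alpha Fx\to x$ alone you control $L_{\alpha(\delta)}y$ only for $y\in\range F$; for $y\in\dom F^\dagger\setminus\range F$ you additionally need $L_\alpha$ to vanish (at least asymptotically) on $(\range F)^\perp$, which holds for the spectral families $g_\alpha(F^*F)F^*$ of the cited sources but is not implied by the hypotheses as transplanted into this paper.

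The genuine gap is the one you flag yourself: the necessity of $\alpha(\delta)\to 0$. Your Banach--Steinhaus step only yields $\sup_n\|L_{\alpha(\delta_n)}\|=\infty$ along the offending subsequence, which is no contradiction, since nothing bounds $\|L_\alpha\|$ for $\alpha$ bounded away from $0$ either (and the norms may blow up while still satisfying $\delta_n\|L_{\alpha(\delta_n)}\|\to 0$). In fact this implication is not provable from the stated hypotheses at all: the family $(L_\alpha)$ is constrained only in the limit $\alpha\to 0$, so one may reparametrize a convergent family, e.g.\ $\tilde L_\alpha:=L_{\min(\alpha,1/\alpha)}$, which still satisfies $\tilde L_\alpha Fx\to x$ as $\alpha\to 0$, and then the a-priori choice $\alpha(\delta)=1/\delta$ gives $\tilde L_{\alpha(\delta)}=L_\delta$, a convergent method with $\alpha(\delta)\to\infty$. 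The claim is true in the setting of the quoted references because there the operators have filter structure tying the behavior of $L_\alpha$ away from $0$ to that near $0$ (equivalently, $L_\alpha y\to F^\dagger y$ \emph{only} as $\alpha\to 0$). So your strategy and your diagnosis of where the difficulty sits are both correct, but that last step cannot be closed without importing this additional structural assumption; it is a gap in the statement as reproduced here rather than a repairable omission in your argument.
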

In other words, a-priori parameter choice rules necessarily need to fulfill certain decay conditions.

Finally we mention the ``converse results'' from~\cite{neubauer1997saturationtikhonov}
which say that for
Tikhonov regularization in Hilbert spaces certain convergence rates
imply that certain source conditions are
fulfilled
(see~\cite{flemming2011converse} for generalization to other
regularization methods).

 Before we start our investigation of necessary conditions for
 variational regularization in Section~\ref{sec:nec-cond}, we start
 with a section in which we formalize the notation of a ``variational
 scheme'' and investigate a few different variational methods.

\section{Variational schemes: Tikhonov, Morozov, and Ivanov}
\label{sec:var-reg}
In this section we formalize the notion of a variational scheme which
can be used to build variational regularization methods.  We start by
fixing the ingredients of an \emph{inverse problem}: In this paper we
take the point of view, that an inverse problem consists of a mapping
$F:(X,\tau_X) \to (Y,\tau_Y)$ between two topological spaces, usually
called \emph{forward operator}. The space $X$ is the \emph{solution
  space} and $Y$ is called \emph{data space}. We further assume that
$F$ is continuous, i.e.~the forward problem (calculating the
data for some given solution) is well-posed. In contrast, the solution of
an equation $F(x)=y$ for some given $y$ does not need to be
well-posed.
As in~\cite{ivanov1969topological} and the more recent references
\cite{grasmair2011residual,grasmair2012multiparameter} we use
topological spaces since the functionals we consider do no take any
linear structure into account which would justify the use of linear or
normed spaces. 

A variational scheme consists of all ingredients which are needed to
classify and analyze the associated minimization problems and their
minimizers under perturbations of the data $y$. Hence, it should
encode information about the notions of convergence, ``proximity'', and
the objective functional to be minimized. However, we do not allow for
totally arbitrary objective functionals but we rather use the
intuition that a variational scheme involves two functionals: a
``similarity measure'' or ``discrepancy functional'' $\rho$ and a
``regularization functional'' $R$. The functional $\rho$ is used to
measure ``similarity'' in the data space in the sense that
$\rho(F(x),y)$ is small if $x$ explains the data $y$ well. The
functional $R$ on the solution space is used to measure how well $x$
fits prior knowledge in the sense that $R(x)$ is small for an $x$
which fulfills the prior knowledge well.

\begin{definition}[Variational scheme]
  \label{def:variational_scheme}
  By a \emph{variational scheme} for a given inverse problem
  $F:(X,\tau_X)\to(Y,\tau_Y)$ we understand a tuple
  $\mathcal{M}=\left(\rho, R,\seqstruc\right)$, consisting of
  \begin{itemize}
  \item the \emph{discrepancy functional} $\rho:Y\times Y\rightarrow
    [0,\infty]$, for which we assume that $\rho(y,y)=0$ for all $y\in
    Y$,
  \item the \emph{regularization functional} $R:X\rightarrow
    [0,\infty]$, and
  \item a sequential convergence structure $\seqstruc$ on $Y$.
    
    That is, $\seqstruc$
    is a mapping which maps any element in $Y$ to a set of
    sequences in $Y$ such that the constant sequence $(y)$ is an
    element of $\seqstruc(y)$ and that if a sequence is in $\seqstruc(y)$
    then so does any of its subsequences. Usually, we denote
    $(y_n)_n\in\seqstruc(y)$ by $y_n\conv{\seqstruc} y$ and say that $(y_n)$
    converges to $y$ (with respect to $\seqstruc$), see also~\cite[\S
1.7]{beattie2002convergence}.
  \end{itemize}
\end{definition}

 While  most ingredients of a
variational scheme are standard, we remark on the sequential convergence
structure $\seqstruc$: Often decaying noise is described in terms of
norm-convergence, a notion which is not available here and sometimes
may not even be
appropriate (see, e.g.~\cite{flemming2011diss}
and~\cite{poeschl2008diss}).  Therefore, the sequential convergence
structure will be used to describe ``vanishing noise'' in $Y$,
i.e.~the vanishing of noise is modeled by convergence of a sequence
$(y_n)$ to noise free data $y$ w.r.t.~$\seqstruc$.  Of course, a
topology induces a sequential convergence structure but not all
sequential convergence structures are topological (e.g. pointwise
almost everywhere convergence is not induced by a
topology~\cite{ordman1966nottopological}). Moreover, a sequential
convergence structure induced by a topology may not encode all
information of the topology (consider the case of the sequence space
$\ell^1$ for which, by Schur's Theorem~\cite{diestel1984banachspaces},
the weak topology induces the same convergence structure as the norm
topology although the former is strictly weaker than the latter). Note
that we do not assume that convergence w.r.t.~$\seqstruc$ is
topological since this is not used in standard proofs for regularizing
properties (e.g.~\cite{hofmann2007convtikban}).  Moreover, the
topology $\tau_Y$ may induce a different convergence structure which
is more tied to the mapping properties of $F$. 
Of course, there
will be further relations between $\tau_Y$, $\seqstruc$ and $\rho$ in
the following, and indeed, Section~\ref{sec:nec-cond} mainly deals
with these relations, but for the general variational scheme we keep
them mostly unrelated.

We mention that we included the value $\infty$ in the range of the
discrepancy functional $\rho$ and the regularization functional $R$ to
model that certain data may be considered ``incomparable'' or that
certain solutions may be deemed to be impossible. As usual, the value $\infty$ is
excluded for minimizers by definition and we use the notation $\dom R
= \set{x}{R(x)<\infty}$ (similarly for $\rho$).

Variational regularization methods can be build from variational
schemes as follows.  Instead of solving $F(x)=y$ we aim at two goals:
Find an $x\in X$ such that $x$ explains the data $y$ well, in the
sense that $\rho(F(x),y)$ is small, and $x$ fits to our prior knowledge
in the sense that $R(x)$ is small.  In other words: We have two
objective functionals $x\mapsto \rho(F(x),y)$ and $x\mapsto R(x)$ which
we would like to ``jointly minimize'' and such problems go under the
name of ``vector optimization''. A core notion
there is that of ``Pareto-optimal solutions'', i.e., solutions $x^*$
such that there does not exist an $x$ such that $R(x)\leq R(x^*)$ and
$\rho(F(x),y)\leq \rho(F(x^*),y)$ and one of both inequalities is
strict~\cite[\S 4.7]{boyd2004convexoptimization}.  Note that for
``exact data'', i.e. $y^{\text{exact}}$ in the range of $F$, the
notion of Pareto optimality induces a notion of generalized solutions
of the equation $F(x)=y$ (see~\cite{flemming2011diss} for a slightly
different notion):
\begin{definition}
  Let $\left(\rho,R,\seqstruc\right)$ be a variational scheme for
  $F:(X,\tau_X)\to (Y,\tau_Y)$ and $y^{\text{exact}}$ be in the range
  of $Y$. We say that $\bar x$ is a \emph{$\rho$-generalized
    $R$-minimal solution} of $F(x)=y^{\text{exact}}$ if $\rho(F(\bar
  x),y^{\text{exact}})=0$ and $R(\bar x) =
  \min\set{R(x)}{\rho(F(x),y^{\text{exact}})=0}$.
\end{definition}

Using the two objective functionals $\rho(F(\cdot),y)$ and $R$ we can
build at least three different minimization problems which aim at
finding Pareto optimal solutions. These three problems are well known
in the inverse problems community and in fact can be traced back to
the pioneering works in the Russian school: \emph{Tikhonov
regularization}~\cite{tikhonov1963regularization} sets $T_{\alpha,
  y}(x):=\rho(F(x),y)+\alpha R(x)$ for some $\alpha>0$ and considers
\begin{equation}
  \label{eq:tikhonov}
  T_{\alpha, y}(x)\rightarrow \min_{x\in X}.
\end{equation}
In other words: Choose a weighting between ``good data fit'' and
``good fit to prior knowledge'' and minimize the weighted objective
functional. \emph{Ivanov regularization}~\cite{ivanov1962wellposed} uses $\tau >
0$ and considers
\begin{equation}
  \label{eq:ivanov}
  \rho(F(x),y)\rightarrow \min_{x\in X}\quad \text{s.t.}\quad R(x)\leq\tau.
\end{equation}
In other words: Choose the solution with the best data-fit which also
fits the prior knowledge up to a predefined amount.  Finally, \emph{Morozov
regularization}~\cite{morozov1967discrepancy} uses $\delta >0$ and
considers
\begin{equation}
  \label{eq:morozov}
  R(x)\rightarrow \min_{x\in X}\quad \text{s.t.} \quad\rho(F(x),y)\leq \delta.
\end{equation}
In other words: Choose the solution which fits best the prior
knowledge among the ones which explain the data up to a
predefined amount.

These methods are treated and compared
e.g.~in~\cite[Ch.~3.5]{ivanov2002linearillposed}
in the case of Banach spaces and $\rho(F(x),y) = \norm{F(x)-y}^p$ and
$R(x) = \norm{Lx}^q$ with a (possibly unbounded) linear operator $L$ (where~(\ref{eq:ivanov}) is called ``method of quasi-solutions''
and~(\ref{eq:morozov}) goes under the name ``method of the residual'').
We state a result on the relation of the minimizers of these methods
in our abstract framework of a variational scheme without any
convexity assumptions on $R$ or $\rho$.
\begin{theorem}
  \label{thm:relations_var_methods}
  Let $(\rho,R,\seqstruc)$ be a variational scheme for
  $F:(X,\tau_X)\to (Y,\tau_Y)$.
  \begin{enumerate}
  \item If there exists a unique solution $x_\tau$
    of~(\ref{eq:ivanov}), then it is the unique solution
    of~(\ref{eq:morozov}) with $\delta= \rho(F(x_\tau),y)$.
  \item If there exists a unique solution $x_\delta$
    of~(\ref{eq:morozov}), then it is the unique solution
    of~(\ref{eq:ivanov}) with $\tau=R(x_\delta)$.
  \item If there exists a unique solution $x_\alpha$ of~(\ref{eq:tikhonov})
    then it solves~(\ref{eq:ivanov}) with $\tau=R(x_\alpha)$ and
    (\ref{eq:morozov}) with $\delta=\rho(F(x_\alpha),y)$.
  \end{enumerate}
\end{theorem}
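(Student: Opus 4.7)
The plan is to argue directly from the definitions, exploiting feasibility relationships between the three problems. For each part, once one sets the parameter as prescribed, the original minimizer is automatically feasible for the target problem, and the only work is to verify that no other feasible point does strictly better.

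For part (i), let $x_\tau$ be the unique Ivanov minimizer and put $\delta := \rho(F(x_\tau),y)$. Then $x_\tau$ is Morozov-feasible. I would take any Morozov-feasible $x$, i.e.\ $\rho(F(x),y)\le\delta$, and split cases on $R(x)$. If $R(x)\le\tau$, then $x$ is Ivanov-feasible with $\rho(F(x),y)\le\rho(F(x_\tau),y)$, so uniqueness of $x_\tau$ in (\ref{eq:ivanov}) forces $x=x_\tau$ and in particular $R(x)=R(x_\tau)$. If instead $R(x)>\tau\ge R(x_\tau)$ (the last inequality because $x_\tau$ is Ivanov-feasible), then $R(x_\tau)<R(x)$ strictly. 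In either case $R(x_\tau)\le R(x)$, with equality only when $x=x_\tau$, giving unique minimality in (\ref{eq:morozov}). Part (ii) is entirely symmetric: for the unique Morozov minimizer $x_\delta$ set $\tau:=R(x_\delta)$ so that $x_\delta$ is Ivanov-feasible, and for any Ivanov-competitor $x$ split on whether $\rho(F(x),y)\le\delta$. The feasible case lands us inside the Morozov problem where uniqueness of $x_\delta$ applies; the infeasible case gives $\rho(F(x),y)>\delta\ge\rho(F(x_\delta),y)$.

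Part (iii) is purely algebraic from the Tikhonov inequality. Since $x_\alpha$ minimizes $T_{\alpha,y}$, for every $x\in X$
\[
\rho(F(x_\alpha),y)-\rho(F(x),y)\le\alpha\bigl(R(x)-R(x_\alpha)\bigr).
\]
Testing with any Ivanov-feasible $x$ (so $R(x)\le\tau=R(x_\alpha)$) makes the right side nonpositive, yielding $\rho(F(x_\alpha),y)\le\rho(F(x),y)$, so $x_\alpha$ solves (\ref{eq:ivanov}). Testing instead with any Morozov-feasible $x$ (so $\rho(F(x),y)\le\delta=\rho(F(x_\alpha),y)$) rearranges to $\alpha(R(x_\alpha)-R(x))\le 0$, hence $R(x_\alpha)\le R(x)$ and $x_\alpha$ solves (\ref{eq:morozov}). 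Note that (iii) claims only minimality, not uniqueness, so no case analysis is needed here.

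The whole argument is a bookkeeping exercise; the only subtle point is in (i) and (ii), where one must distinguish between feasibility in the two problems to upgrade mere minimality to uniqueness. The strict inequality $R(x_\tau)<R(x)$ in the ``infeasible for the partner problem'' branch is what rules out a second Morozov (resp.\ Ivanov) minimizer, and this is the single place where the uniqueness hypothesis on $x_\tau$ (resp.\ $x_\delta$) is genuinely used.
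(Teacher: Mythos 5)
Your proof is correct and takes essentially the same approach as the paper's: in each part you compare an arbitrary feasible point of the target problem against the optimal value of the source problem and invoke uniqueness (resp.\ the Tikhonov minimality inequality), the only cosmetic difference being that you argue directly by case distinction where the paper argues by contradiction.
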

\begin{proof}
  (i) With $\delta=\rho(F(x_\tau),y)$, it is clear that $x_\tau$ is
  feasible for the optimization problem~(\ref{eq:morozov}) and the
  objective value is $R(x_\tau)$. Assume that there is a solution
  $\bar x\neq x_\tau$ of~(\ref{eq:morozov}) with $R(\bar x) \leq
  R(x_\tau)\leq\tau$. Then, $\bar x$ would be feasible
  for~(\ref{eq:ivanov}) with objective $\rho(F(\bar x),y)\leq\delta =
  \rho(F(x_\tau),y)$ which is a contradiction to the uniqueness of the
  solution $x_\tau$.  The proof of (ii) mimics the proof of (i).  For
  (iii) again, assume that there exists a solution $\bar x\neq
  x_\alpha$ of~(\ref{eq:ivanov}). Then, one sees that
  $T_{\alpha,y}(\bar x)\leq T_{\alpha,y}(x_\alpha)$ contradicting the
  uniqueness of $x_\alpha$. The proof is similar for the last claim.
\end{proof}
We remark that the missing implications in
Theorem~\ref{thm:relations_var_methods} are not true without
additional assumptions.
\begin{example}[Unique Ivanov and Morozov minimizers need not to be Tikhonov minimizers]
  \label{ex:iv_notimplies_tikh}
  We illustrate this by a simple
  one-dimensional example: Let $X=Y=\RR$, $F=\id$ and consider the
  regularization functional $R(x) = \abs{x+1}$ (saying that the
  solution should be close to $-1$) and as discrepancy functional the
  so-called Bregman distance with respect to the strictly convex
  function $x\mapsto x^4$, i.e. $\rho(x,y) = y^4-x^4 - 4x^3(y-x)$. We
  choose $\tau=1$ and $y=1$ and obtain $x_\tau=0$ as the unique solution
  of~(\ref{eq:ivanov}) (which is also the unique
  solution of~(\ref{eq:morozov}) with $\delta=1$). But there is no
  $\alpha>0$ such that $x=0$ is a minimizer of $T_{\alpha,1}(x) = \rho(x,1)
  + \alpha\abs{x+1}$
  (cf.~Figure~\ref{fig:example_iv_notimplies_tikh}).
    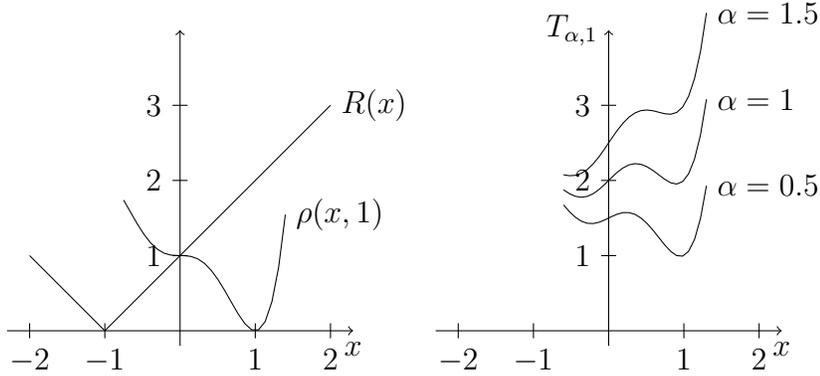
\begin{figure}[tb]
    \centering
    \begin{tikzpicture}
      \draw[->] (-2.3,0) -- (2.3,0) node[below]{$x$};
      \draw[->] (0,-0.2) -- (0,4);
      
      \foreach \x in {-2,-1,1,2}
      \draw(\x,0.1) -- (\x,-0.1) node[below]{$\x$};
      \foreach \y in {1,2,3}
      \draw(0.1,\y) -- (-0.1,\y) node[left]{$\y$};
      
      \draw[domain=-0.75:1.4] plot (\x, {3*\x^4 - 4*\x^3 +1}) node[right]{$\rho(x,1)$}; 
      \draw[domain=-2:2] plot (-2,1) -- (-1,0) -- (2,3) node[right]{$R(x)$};
      
    \end{tikzpicture}
    \begin{tikzpicture}
      \draw[->] (-2.3,0) -- (2.3,0) node[below]{$x$};
      \draw[->] (0,-0.2) -- (0,4) node[left]{$T_{\alpha,1}$};
      
      \foreach \x in {-2,-1,1,2}
      \draw(\x,0.1) -- (\x,-0.1) node[below]{$\x$};
      \foreach \y in {1,2,3}
      \draw(0.1,\y) -- (-0.1,\y) node[left]{$\y$};

      \draw[domain=-.6:1.3] plot (\x, {3*\x^4 - 4*\x^3 +1  + 0.5*abs(\x+1)}) node[right]{$\alpha = 0.5$};
      \draw[domain=-.6:1.3] plot (\x, {3*\x^4 - 4*\x^3 +1  + 1*abs(\x+1)}) node[right]{$\alpha = 1$};
      \draw[domain=-.6:1.3] plot (\x, {3*\x^4 - 4*\x^3 +1  + 1.5*abs(\x+1)}) node[right]{$\alpha = 1.5$};
    \end{tikzpicture}
    \caption{Illustration of Example~\ref{ex:iv_notimplies_tikh}}
    \label{fig:example_iv_notimplies_tikh}
  \end{figure}
\end{example}

In the above examples it holds that $x_\tau$ is a stationary point of
the mapping $x\mapsto \rho(F(x),y)$. Note that the precise form of $R$
is not important in this example,
several other $R$ with $R'(0)>0$ would also work.
Indeed, we can deduce from the next proposition that it is necessary
for $x_\tau$ to be also a (local) Tikhonov
minimizer that not both of these properties are fulfilled.
\begin{proposition}[\protect{\cite[Thm.~4.13]{poeschl2008diss}}]
  \label{prop:local-tikhonov}
  Let $(\rho,R,\seqstruc)$ be a variational scheme for\\
  $F:(X,\tau_X)\to (Y,\tau_Y)$, $X$ be a normed space and
  let $y\in Y$.  Furthermore, assume that the mappings $f(x) =
  \rho(F(x),y)$ and $R$ obey directional derivatives $R'(x^*;v)$ and
  $f'(x^*;v)$ for all directions $v\in X$.
  
  If $x^*$ is a local minimizer of $T_{\alpha,y}$ for some 
  $\alpha >0$ then for every $v$ it holds that
  \[
  -\alpha R'(x^*;v) \leq f'(x^*;v).
  \]
  Moreover, if the directional derivatives of $f$ and $R$ at $x^*$ are
  linear in $v$ and $R'(x^*;\cdot)\neq 0$, then $f'(x^*;\cdot)\neq 0$.
\end{proposition}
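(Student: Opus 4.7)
The plan is to reduce the statement to the one-variable fact that a differentiable function attaining a local minimum has a non-negative right derivative there, applied along arbitrary rays through $x^{*}$.

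More precisely, fix $v\in X$. Since $x^{*}$ is a local minimizer of $T_{\alpha,y}$, there is a neighborhood of $0$ on which the real function $\varphi(t):=T_{\alpha,y}(x^{*}+tv)=f(x^{*}+tv)+\alpha R(x^{*}+tv)$ attains its minimum at $t=0$. For $t>0$ small enough this yields $\varphi(t)-\varphi(0)\ge 0$, so dividing by $t$ and passing to the limit $t\downarrow 0$ (using the hypothesized existence of the directional derivatives and the linearity of the limit in finite sums) produces
\[
f'(x^{*};v)+\alpha R'(x^{*};v)\;=\;\lim_{t\downarrow 0}\frac{\varphi(t)-\varphi(0)}{t}\;\ge\;0,
\]
which, after rearrangement, is precisely the first claimed inequality. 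This part should be routine; the only mild subtlety is that the sum $(f+\alpha R)'(x^{*};v)$ must really equal $f'(x^{*};v)+\alpha R'(x^{*};v)$, which follows from the existence of both one-sided limits together with the standard algebra of limits.

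For the second assertion I would argue by contradiction. Suppose $R'(x^{*};\cdot)\not\equiv 0$ but $f'(x^{*};\cdot)\equiv 0$. The inequality from the first part then forces $R'(x^{*};v)\ge 0$ for every $v\in X$. Invoking linearity of $R'(x^{*};\cdot)$, we may replace $v$ by $-v$ to obtain $-R'(x^{*};v)=R'(x^{*};-v)\ge 0$ as well, so $R'(x^{*};v)=0$ for every $v$. This contradicts the assumption $R'(x^{*};\cdot)\neq 0$, and therefore $f'(x^{*};\cdot)\neq 0$.

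I do not anticipate a significant obstacle; the only point worth watching is to be explicit about which notion of directional derivative is meant (one-sided versus two-sided) and to note that for the contradiction step the linearity assumption on $R'(x^{*};\cdot)$ is used only to pass from $v$ to $-v$, so the analogous linearity for $f'(x^{*};\cdot)$ is not actually needed in the second half. The scheme above therefore uses the hypotheses in a minimal way and matches the wording of the proposition.
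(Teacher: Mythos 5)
Your argument is correct and is the standard one: first-order optimality of $T_{\alpha,y}=f+\alpha R$ along each ray $t\mapsto x^*+tv$ gives $f'(x^*;v)+\alpha R'(x^*;v)\ge 0$ (both one-sided difference quotients converge by hypothesis, and the minimizer has finite objective value so the quotients are well defined), and the second claim follows by testing with $v$ and $-v$ using linearity of $R'(x^*;\cdot)$. The paper does not reproduce a proof but defers to the cited reference; your route is exactly the expected one, and your observation that only the linearity of $R'(x^*;\cdot)$, not of $f'(x^*;\cdot)$, is needed in the contradiction step is accurate.
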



In other words: If we have a solution $x^*$ of~(\ref{eq:ivanov}) with
$R'(x^*;\cdot)\neq 0$ which is also a
local minimizer of $T_{\alpha,y}$ then it is not stationary for
$x\mapsto \rho(F(x),y)$.

\begin{remark}
  Under convexity assumptions on $f(x) = \rho(F(x),y)$ and $R$ one can
  show that Ivanov minimizers (or Morozov minimizers) are indeed also
  Tikhonov minimizers for some parameter $\alpha>0$ if they are not
  minimizers of the constraint. This is related to the fact that the
  subgradients of convex functions describe the normal vectors to the
  sublevel sets of the respective function, see
  e.g.~\cite{schirotzek2007nonsmoothanalysis}.
\end{remark}

Although the variational
problems~(\ref{eq:tikhonov}),~(\ref{eq:ivanov}),
and~(\ref{eq:morozov}) share their solutions under the circumstances
presented above, they often differ with respect to their practical
application.

It has been remarked already in early works (see,
e.g.,~\cite{tanana1975optimality}) that Ivanov and Morozov
regularization are related to different types of prior knowledge on
the exact equation $F(x^{\text{exact}})=y^{\text{exact}}$. Morozov
regularization is related to prior knowledge about the exact
\emph{data} or the \emph{noise level}, i.e., upper estimates on the
quantity $\rho(y^{\text{exact}},y)$. Ivanov regularization is related
to prior knowledge about the exact \emph{solution}, i.e, about upper
estimates about the quantity $R(x^{\text{exact}})$.  Hence, the choice
between Morozov and Ivanov regularization should be based upon the
available prior knowledge at hand.  However, there are more factors,
which should be taken into account when choosing the variational
method: Since the three optimization
problems~(\ref{eq:tikhonov}),~(\ref{eq:ivanov}), and~(\ref{eq:morozov}) may
belong to different ``subclasses'' of optimization problems their
solution may have different computational complexity.
\begin{example}[Linear problems in Hilbert space]
  In this classical setting, $X$ and $Y$ are Hilbert spaces, $F$ is
  bounded and linear and we use $\rho(Fx,y) = \norm[Y]{Fx-y}^2$ and
  $R(x) = \norm[X]{x}^2$. In this case, the Tikhonov problem has an
  explicit solution $x_\alpha = (F^*F + \alpha\id)^{-1}F^* y$ which
  can be treated numerically in several convenient ways (since the
  operator which has to be inverted is self-adjoint and positive
  definite).
  
  However, for both Ivanov and Morozov regularization no closed
  solution exists in general and one usually resorts to solving a
  series of Tikhonov problems, adjusting the parameter $\alpha$ such
  that the Ivanov or Morozov constraint is
  fulfilled~\cite{frommer1999fastcg}.
\end{example}
\begin{example}[Sparse regularization]
  We consider regularization of a linear operator equation $Ku=g$ with
  an operator $K:\ell^2\to Y$ with a Hilbert space $Y$ by means of a
  sparsity
  constraint~\cite{daubechies2003iteratethresh,lorenz2008reglp,grasmair2008sparseregularization}. In
  this setting one works with the discrepancy functional $\rho(Ku,g) =
  \frac{1}{2}\norm[Y]{Ku-g}^2$ and the regularization functional $R(x)
  = \norm[1]{u}$ (extended by $\infty$ if the 1-norm does not exist).
  In this case Tikhonov regularization consists of solving a convex,
  non-smooth, and unconstrained optimization problem (it is a
  non-smooth convex program, however, with additional structure).
  Morozov regularization consists of solving a non-smooth and convex
  optimization problem with a (smooth) convex constraint (and it can
  be cast as a second-order cone-program), and Ivanov regularization
  requires solving a smooth and convex optimization problem with a
  non-smooth convex constraint (it is a quadratic program).
  
  Looking a little bit closer on this classification and the
  properties of $\rho$ and $R$ we observe that Ivanov regularization
  gives in fact the ``easiest'' problem since it obeys a smooth objective
  function and a constraint with a fairly easy structure
  (e.g.~it is easy to
  calculate projections onto the constraint). On the other hand, the Morozov
  problem is ``difficult'' since it involves a non-smooth objective
  over a fairly complicated convex set (in the sense that projections
  onto the set $\norm{Ku-g}\leq\delta$ are costly to calculate).
  Indeed, this rationale is behind the SPGL1
  method~\cite{vandenberg2007spgl1,vandenberg2008paretofrontiertbasispursuit}:
  It replaces the Morozov problem with a sequence of Ivanov problems,
  solving each by a spectral projected gradient method, resulting in 
  one of the fastest methods available for Morozov regularization
  with $\ell^1$ regularization functional.
  
\end{example}
  
In conclusion, the choice between the three variational methods should
be based on the available prior knowledge and also on the tractability
and the complexity of the corresponding optimization problem (often
leading to a combination of two methods).

\section{Necessary conditions for Tikhonov schemes}
\label{sec:nec-cond}
In this section we analyze regularization properties of the Tikhonov
method. First we formalize our requirements for a scheme to be
regularizing in the Tikhonov case. As usual we formulate conditions on
existence, stability and convergence of the minimizers,
cf.~\cite{scherzer2009variationalmethods}.
\begin{definition}[Tikhonov regularization scheme]
  A variational scheme $(\rho,R,\seqstruc)$ for $F:(X,\tau_X) \to (Y,\tau_Y)$
  is called \emph{Tikhonov regularization scheme}, if
  the following conditions are  fulfilled:
  \begin{itemize}
    \item[(R1)] Existence: For all $\alpha>0$ and all $y\in Y$ it holds that $\argmin_{x\in X} T_{\alpha,y}(x)\neq\emptyset$.
    \item[(R2)] Stability: Let $\alpha >0$ be fixed, $y_n\conv{\seqstruc}y$ and 
                      $x_n\in \argmin_{x\in X} T_{\alpha, y_n}(x)$. Then $(x_n)$ converges 
                      subsequentially in $\tau_X$ and for each
                      subsequential limit $\bar{x}$ of $(x_n)$ it holds that
                      $\bar{x}\in\argmin_{x\in X} T_{\alpha, y}(x)$ .
    \item[(R3)] Convergence:  Let $F(x)=y$ have an exact solution $x^{exact}$ such that $R(x^{exact})<\infty$ and
      $y_n\conv{\seqstruc}y$. Then there exists a sequence $(\alpha_n)_n$ of
      positive real numbers such that
     $x_n\in\argmin_{x\in X} T_{\alpha_n,y_n}(x)$ converges subsequentially in $\tau_X$ and
     every subsequential limit $\bar{x}$ is a $\rho$-generalized $R$-minimal solution of
     $F(x)=y$. 
  \end{itemize}
\end{definition}

\subsection{Trivial necessary conditions}
First we list fairly obvious necessary conditions to be regularizing
in the Tikhonov sense.
To that end, we introduce the solution operator
\[
  \begin{array}{rccl}
    \mathcal{A}:&Y\times{]}0,\infty{[} & \rightarrow &2^X \\
    &(y,\alpha) & \mapsto & \argmin_{x\in X}T_{\alpha,y}(x).    
  \end{array}
\]
for the Tikhonov problem~(\ref{eq:tikhonov}).  For fixed $\alpha>0$ we
denote $\mathcal{A}_\alpha(y) = \mathcal{A}(y,\alpha)$. We consider
$\mathcal{A}$ and $\mathcal{A}_\alpha$ as set valued mappings and use
the respective notation (see,
e.g.,\cite{schirotzek2007nonsmoothanalysis}), especially the notion of
the domain $\dom \mathcal{A}_\alpha = \set{y\in
  Y}{\mathcal{A}_\alpha(y)\neq\emptyset}$ and the graph
$\graph(\mathcal{A}_\alpha) = \set{(y,x)\in Y\times
  X}{x\in\mathcal{A}_\alpha(y)}$.

Moreover, we recall that a topology is called \emph{sequential} if it
can be described by sequences, i.e. every sequentially closed set is
closed.

\begin{remark}Let $(\rho,R,\seqstruc)$ be a variational scheme for
  $F:(X,\tau_X)\to (Y,\tau_Y)$. Then obviously (R1) is fulfilled if and only if 
$\dom{\mathcal{A}} = Y\times]0,\infty[$, which implies that $\dom R\cap F^{-1}(\dom \rho(\cdot,y))\neq\emptyset$ and hence
$\range F\cap \dom \rho(\cdot,y)\neq\emptyset$ does hold for all $y\in Y$.
\end{remark}
\begin{theorem} Let $(\rho,R,\seqstruc)$ be a variational scheme for
  $F:(X,\tau_X)\to (Y,\tau_Y)$ that fulfills (R2), $\alpha >0$ and $y\in Y$. Then 
\begin{enumerate}
\item $\mathcal{A}_\alpha(y)$ is sequentially compact and
so is 
$\left(\bigcup_{n\in\NN}\mathcal{A}_\alpha(y_n)\right)\cup\mathcal{A}_\alpha(y)$
 for every sequence $(y_n)$ in $Y$ such that $y_n\conv{\seqstruc}y$.
\item The implication\[\left.
    \begin{array}{c}
     y_n\conv{\seqstruc}y\\
     x_n\conv{\tau_X}x\\
     x_n\in\mathcal{A}(y_n,\alpha)
    \end{array}
   \right\rbrace\Rightarrow x\in \mathcal{A}(y,\alpha)
 \]
  does hold,
  i.e. the mapping $\mathcal{A}_\alpha$
  is sequentially closed.
  
  If $\seqstruc$ is induced by a topology $\tau$ and $\tau\times \tau_X$ is 
  sequential, 
  then $\graph(\mathcal{A}_\alpha)$ is closed for every $\alpha >0$.
  
 If furthermore $\mathcal{A}_\alpha$ is single valued, then (R2) does hold if and only if $\mathcal{A}_\alpha$ is continuous w.r.t. $\seqstruc$ and the
  sequential convergence structure of $\tau_X$.
\end{enumerate}
\end{theorem}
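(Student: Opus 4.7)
The plan is to exploit (R2) in combination with the two structural axioms of a sequential convergence structure: constant sequences converge and subsequences of $\seqstruc$-convergent sequences again $\seqstruc$-converge. Throughout let $U:=\bigl(\bigcup_{n}\mathcal{A}_\alpha(y_n)\bigr)\cup\mathcal{A}_\alpha(y)$. For the first half of (i), sequential compactness of $\mathcal{A}_\alpha(y)$, I specialize (R2) to the constant sequence $y_n\equiv y$, which lies in $\seqstruc(y)$ by definition: any $(x_n)\subset\mathcal{A}_\alpha(y)$ then satisfies the hypothesis of (R2), hence admits a $\tau_X$-convergent subsequence with limit in $\mathcal{A}_\alpha(y)$.

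For compactness of $U$, given $(z_k)\subset U$ I distinguish two cases. If some single set $\mathcal{A}_\alpha(y_{n_0})$ or $\mathcal{A}_\alpha(y)$ contains infinitely many $z_k$, I invoke the compactness of that single set (the compactness of $\mathcal{A}_\alpha(y_{n_0})$ itself being an instance of the statement just proved). Otherwise I can extract $z_{k_j}\in\mathcal{A}_\alpha(y_{n_j})$ with $n_j\in\NN$ strictly increasing; then $(y_{n_j})_j$ is a subsequence of $(y_n)_n$ and by the subsequence axiom still lies in $\seqstruc(y)$, so (R2) applied to $(y_{n_j})$ and $(z_{k_j})$ yields a $\tau_X$-convergent sub-subsequence whose limit is in $\mathcal{A}_\alpha(y)\subset U$.

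For statement (ii), the sequential closedness of $\mathcal{A}_\alpha$ is a direct restatement of (R2): a $\tau_X$-limit of the full sequence $(x_n)$ is in particular a subsequential limit of $(x_n)$ in the sense of (R2), hence in $\mathcal{A}_\alpha(y)$. When $\seqstruc$ is induced by a topology $\tau$, sequential convergence in the product $\tau\times\tau_X$ is componentwise, so the sequential closedness just established is exactly the sequential closedness of $\graph(\mathcal{A}_\alpha)$ in $\tau\times\tau_X$; under the sequentiality assumption on $\tau\times\tau_X$, sequentially closed sets are closed and we are done.

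For the single-valued case, write $\mathcal{A}_\alpha(y)=\{f(y)\}$. The direction continuity $\Rightarrow$ (R2) is immediate, since $f(y_n)\conv{\tau_X}f(y)$ simultaneously supplies a convergent subsequence and forces its limit to be the unique element of $\mathcal{A}_\alpha(y)$. For the converse, given $y_n\conv{\seqstruc}y$, I apply (R2) not only to $(y_n)$ but to every subsequence $(y_{n_k})$—which still lies in $\seqstruc(y)$ by the subsequence axiom—to obtain a further sub-subsequence of $(f(y_{n_k}))$ that $\tau_X$-converges, necessarily to $f(y)$ since $\mathcal{A}_\alpha(y)$ is a singleton. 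The standard subsequence criterion for topological convergence to a specified point (whose contrapositive extracts from any non-converging sequence a subsequence eventually outside a fixed neighbourhood of $f(y)$, hence with no sub-subsequence converging to $f(y)$) then yields $f(y_n)\conv{\tau_X}f(y)$. The only mildly delicate step is this final subsequence argument in the single-valued case; everything else reduces to careful bookkeeping within the definitions of $\seqstruc$ and of (R2).
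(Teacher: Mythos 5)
Your proof is correct and follows essentially the same route as the paper's: the constant-sequence trick for sequential compactness of $\mathcal{A}_\alpha(y)$, the same two-case dichotomy for the union (your extraction of a strictly increasing index sequence $n_j$ is a cleaner version of the paper's ``common subsequence'' bookkeeping), and the identification of sequential closedness of $\mathcal{A}_\alpha$ with sequential closedness of $\graph(\mathcal{A}_\alpha)$ in the topological case. For the single-valued equivalence the paper offers no argument at all, so your subsequence-criterion proof that (R2) implies continuity is a genuine addition; the one caveat lies in the easy direction, where ``forces its limit to be the unique element of $\mathcal{A}_\alpha(y)$'' tacitly assumes that sequential limits in $\tau_X$ are unique --- in a non-Hausdorff $\tau_X$ a subsequence of $(f(y_n))$ could also converge to some $\bar x\neq f(y)$, and (R2) as stated (every subsequential limit lies in $\mathcal{A}_\alpha(y)$) would then fail --- but this assumption is already implicit in the theorem's claim itself rather than a defect of your argument.
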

\begin{proof}
	\begin{enumerate}
       \item Let $(x_n)$ be a sequence in $\mathcal{A}_\alpha(y)$ and consider the constant sequence $y_n:=y$. Then 
  $y_n\conv{\seqstruc}y$ and $x_n\in\mathcal{A}_\alpha(y_n)$ do hold. Therefore (R2) implies the existence of
  a convergent subsequence of $(x_n)$ converging to an element of $\mathcal{A}_\alpha(y)$.

  To prove the second assertion, let  $(x_k)_{k\in\NN}$ be a sequence in 
  $\left(\bigcup_{n\in\NN}\mathcal{A}_\alpha(y_n)\right)\cup\mathcal{A}_\alpha(y)$.   We distinguish two cases:
  \begin{enumerate}
  	\item There exists a $\tilde{y}\in\set{y_n}{n\in\NN}\cup\{y\}$ such that $x_k\in\mathcal{A}_\alpha(\tilde{y})$ for 
  	infinitely many $k\in\NN$.
  	Then $(x_k)$ has a subsequence in $\mathcal{A}_\alpha(\tilde{y})$
  	and the assertion is covered by the first part of the proof.
  	\item For every $\tilde{y}\in\set{y_n}{n\in\NN}\cup\{y\}$ there are at most finitely many $k\in\NN$ such that 
  	$x_k\in\mathcal{A}_\alpha(\tilde{y})$.
  	           Without loss of generality we can assume that $x_k\not\in\mathcal{A}(y)$ for all $k\in\NN$, that the $x_k$ are pairwise distinct and that there is at most one $x_k\in \mathcal{A}(y_n)$ for all $n\in\NN$.
              (otherwise we could choose  an appropriate subsequence).
  	           
  	           Then, the sequence given by $\tilde{y}_k:=y_n$ if $x_k\in \mathcal{A}(y_n)$ is well defined and 
  	           $\set{\tilde{y}_k}{k\in\NN}$ is an infinite subset of $\set{y_n}{n\in\NN}$. Hence $(y_n)$ and $(\tilde{y}_k)$
  	           have a subsequence $(\tilde{y}_{k_m})$ in common.
  	          
%
  	           
              Now $(\tilde{y}_{k_m})$ being a subsequence of $(y_n)$ implies $\tilde{y}_{k_m}\conv{\seqstruc}y$ and due to construction
              $x_{k_m}\in\mathcal{A}_\alpha(\tilde{y}_{k_m})=\argmin_{x\in X}T_{\alpha,\tilde{y}_{k_m}}(x)$ does hold. Applying (R2) yields
              the existence of a convergent subsequence of $(x_{k_m})$ with limit in $\mathcal{A}_\alpha(y)$, which completes the proof.
  \end{enumerate}
  \item The first assertion is just a reformulation of (R2). As to the second assertion, in the case of topological convergence 
        the sequential closedness of $\mathcal{A}_\alpha$ is equivalent to sequential closedness of $\graph(\mathcal{A}_\alpha)$ w.r.t
        $\tau\times\tau_X$. Since the latter topology is sequential, the assertion follows.

  \end{enumerate}
\end{proof}

\subsection{A closer look on the data space}


There exists a vast amount of settings that provide
sufficient conditions for a Tikhonov scheme with non-metric
discrepancy term to be regularizing.
Here we start from a theorem which is extracted
from~\cite{poeschl2008diss,flemming2010nonmetric,flemming2011diss}.
\begin{theorem}\label{thm:poeschl_flemming_example}

 Let $\mathcal{M}=(\rho,R,\seqstruc)$ be a variational scheme for a continuous mapping
  $F:(X,\tau_X)\to (Y,\tau_Y)$ that fulfills the following list of assumptions:
 
 \begin{enumerate}[label=(A\arabic{*})]
     \item \label{it:sublev}The sublevelsets $\set{x\in X}{R(x)\leq M}$ are sequentially compact w.r.t $\tau_X$
              for all $M>0$, so in particular $R$ is sequentially lower semicontinuous
     \item\label{it:domT} $\dom T_{\alpha, y}\neq \emptyset$ for all $y\in Y$
     \item\label{it:seqcont} $(x,y)\mapsto \rho(F(x),y)$ is sequentially $\tau_X\times \tau_Y$ lower semi continuous 
     \item\label{it:seqstruc} The sequential convergence structure $\mathcal{S}$ is given by 
     
           $y_n\conv{\seqstruc}y$ if and only if  $\rho(y,y_n)\rightarrow 0$ \hfill [CONV]
     
              and furthermore it fulfills
              
              $ y_n\conv{\seqstruc}y$ implies $\rho(z,y_n)\rightarrow \rho(z,y)$ for all
              $z\in \dom\rho(\placeholder, y)$\hfill[CONT]
     \item\label{it:seqstructau}$y_n\conv{\seqstruc}y$ implies $y_n\conv{\tau_Y}y$ 
 \end{enumerate}
  Then $\mathcal{M}$ is a Tikhonov regularization scheme.
 \end{theorem}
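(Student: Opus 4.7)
The plan is to verify the three defining properties (R1), (R2), (R3) of a Tikhonov regularization scheme by the direct method of the calculus of variations, in each case orchestrating the sequential compactness from (A1), the joint lower semicontinuity (A3), the continuity [CONT] from (A4), the compatibility (A5) between $\seqstruc$ and $\tau_Y$, and the feasibility (A2).

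For (R1), fix $\alpha>0$ and $y\in Y$. By (A2) one has $\inf T_{\alpha,y}<\infty$, so a minimizing sequence $(x_n)$ exists. Since $\rho\geq 0$, the inequality $\alpha R(x_n)\leq T_{\alpha,y}(x_n)$ forces $R(x_n)$ to be eventually bounded, so (A1) yields a subsequence with $x_{n_k}\conv{\tau_X}\bar x$. Treating the constant sequence $(y)$ as $\tau_Y$-convergent to $y$, (A3) together with the sequential lsc of $R$ from (A1) gives $T_{\alpha,y}(\bar x)\leq\liminf_k T_{\alpha,y}(x_{n_k})=\inf T_{\alpha,y}$, so $\bar x$ is a minimizer.

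For (R2), fix $\alpha>0$, $y_n\conv{\seqstruc}y$, and $x_n\in\mathcal{A}_\alpha(y_n)$. Pick any $\tilde x\in\dom T_{\alpha,y}$, which exists by (A2). Then $F(\tilde x)\in\dom\rho(\placeholder,y)$, so [CONT] gives $\rho(F(\tilde x),y_n)\to\rho(F(\tilde x),y)$, and optimality of $x_n$ delivers
\[
\alpha R(x_n)\leq T_{\alpha,y_n}(x_n)\leq T_{\alpha,y_n}(\tilde x)=\rho(F(\tilde x),y_n)+\alpha R(\tilde x),
\]
so $(R(x_n))$ is bounded and (A1) yields $x_{n_k}\conv{\tau_X}\bar x$ along a subsequence. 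Upgrading $y_{n_k}\conv{\seqstruc}y$ to $y_{n_k}\conv{\tau_Y}y$ via (A5), the joint lsc (A3) combined with (A1) gives $T_{\alpha,y}(\bar x)\leq\liminf_k T_{\alpha,y_{n_k}}(x_{n_k})$. Testing against an arbitrary $x\in\dom T_{\alpha,y}$, [CONT] yields $T_{\alpha,y_{n_k}}(x)\to T_{\alpha,y}(x)$, which together with $T_{\alpha,y_{n_k}}(x_{n_k})\leq T_{\alpha,y_{n_k}}(x)$ forces $T_{\alpha,y}(\bar x)\leq T_{\alpha,y}(x)$.

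For (R3), [CONV] gives $\rho(y,y_n)\to 0$. I choose $\alpha_n>0$ with $\alpha_n\to 0$ and $\rho(y,y_n)/\alpha_n\to 0$, the classical a priori parameter choice rule. Since $F(x^{\text{exact}})=y$, comparing $x_n\in\mathcal{A}_{\alpha_n}(y_n)$ against $x^{\text{exact}}$ gives $R(x_n)\leq\rho(y,y_n)/\alpha_n+R(x^{\text{exact}})$ (bounded) and $\rho(F(x_n),y_n)\to 0$. Extracting via (A1) a subsequence $x_{n_k}\conv{\tau_X}\bar x$ and applying (A5) together with (A3) forces $\rho(F(\bar x),y)=0$. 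For $R$-minimality, given any $\tilde x$ with $\rho(F(\tilde x),y)=0$ and $R(\tilde x)<\infty$, [CONT] yields $\rho(F(\tilde x),y_n)\to 0$, and the optimality inequality $R(x_n)\leq\rho(F(\tilde x),y_n)/\alpha_n+R(\tilde x)$ combined with lsc of $R$ gives $R(\bar x)\leq R(\tilde x)$, provided $(\alpha_n)$ has been chosen so that $\rho(F(\tilde x),y_n)/\alpha_n\to 0$ for every such competitor $\tilde x$. The main obstacle I anticipate is precisely this last ratio condition: a single sequence $(\alpha_n)$ must dominate the decay of $\rho(F(\tilde x),y_n)$ for every feasible $\tilde x$ simultaneously. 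I expect this to be resolved by choosing $(\alpha_n)$ decaying sufficiently slowly relative to $\rho(y,y_n)$ that the pointwise [CONT]-convergence propagates to the necessary rate control; beyond this delicacy, (A5) merely bridges the data-side $\seqstruc$-convergence and the topology $\tau_Y$ in which (A3) is formulated, while (A1) simultaneously supplies the compactness and the sequential lsc of $R$ used throughout.
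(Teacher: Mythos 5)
Your treatment of (R1) and (R2) is correct and follows essentially the same route as the paper's sketch: boundedness of $R(x_n)$ via (A2) and [CONT], extraction via (A1), and passage to the limit via (A5), (A3), the sequential lower semicontinuity of $R$, and [CONT] applied to competitors in $\dom T_{\alpha,y}$.

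The problem is in (R3), precisely at the point you flag and leave open. You want one sequence $(\alpha_n)$ with $\rho(F(\tilde x),y_n)/\alpha_n\to 0$ for \emph{every} competitor $\tilde x$ with $\rho(F(\tilde x),y)=0$, and you hope to get this by letting $\alpha_n$ decay slowly. This cannot work in general: [CONT] only gives $\rho(F(\tilde x),y_n)\to 0$ with no rate, and for any null sequence $(\alpha_n)$ there are null sequences $(a_n)$ (e.g.\ $a_n=\sqrt{\alpha_n}$) with $a_n/\alpha_n\to\infty$; nothing in (A1)--(A5) prevents $\rho(F(\tilde x),y_n)$ from being such a sequence for some competitor, and the family of competitors is in general uncountable, so no diagonal argument saves you.

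The missing idea is that you never need to test against all competitors --- one well-chosen comparison element suffices. First produce a $\rho$-generalized $R$-minimal solution $x^\dagger$: the set $S=\set{x}{\rho(F(x),y)=0}$ contains $x^{\mathrm{exact}}$ and is sequentially closed because $\rho(F(\placeholder),y)$ is nonnegative and sequentially lsc by (A3); a minimizing sequence for $R$ on $S$ has bounded $R$, hence by (A1) a convergent subsequence whose limit $x^\dagger$ lies in $S$ and attains $\min_S R\leq R(x^{\mathrm{exact}})<\infty$. Then choose $(\alpha_n)$ adapted to this single element, e.g.\ $\alpha_n=\max\bigl\{\sqrt{\rho(y,y_n)},\sqrt{\rho(F(x^\dagger),y_n)},\tfrac1n\bigr\}$; both radicands tend to zero ([CONV] for the first, [CONT] for the second, since $F(x^\dagger)\in\dom\rho(\placeholder,y)$), so $\alpha_n\to 0$ while $\rho(F(x^\dagger),y_n)/\alpha_n\to 0$. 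The single inequality $R(x_n)\leq \rho(F(x^\dagger),y_n)/\alpha_n+R(x^\dagger)$ then gives $R(\bar x)\leq R(x^\dagger)=\min_S R$, which is the full minimality required in the definition of a $\rho$-generalized $R$-minimal solution, and $\rho(F(x_n),y_n)\leq T_{\alpha_n,y_n}(x^\dagger)\to 0$ feeds your (correct) argument via (A5) and (A3) that $\rho(F(\bar x),y)=0$. (The paper's sketch takes the comparison element to be the exact solution and $\alpha_n=\sqrt{\rho(y,y_n)}$, so that the numerator is exactly $\rho(y,y_n)$; the mechanism is the same --- one comparison element, chosen so that its discrepancy sequence is under control.)
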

 \begin{proof} We will only give a sketch of the proof, for details we refer to
  \cite{hofmann2007convtikban,poeschl2008diss, flemming2011diss}.

  Since $\rho$ and $R$ are nonnegative~\ref{it:sublev}--\ref{it:seqcont} imply (R1) (existence of minimizers).
  
  Let $(x_n)$ be a sequence of minimizers as in (R2). Then $(R(x_n))$ is bounded due to~\ref{it:domT} and [CONT]. Hence,~\ref{it:sublev} delivers a convergent subsequence. Let $\bar{x}$ be the limit of such a
  subsequence. Then, \ref{it:seqstructau},~\ref{it:seqcont} and [CONT] yield $T_{\alpha, y}(\bar{x})\leq T_{\alpha,y}(x)$ for all $x\in X$. Consequently, (R2) is fulfilled (stability).

 Let $F(x^\dagger)=y$, $R(x^\dagger)<\infty$ and $(y_n)$ be a sequence such that $y_n\conv{\seqstruc}y$. 
 Then, due to [CONV], there exists $\alpha_n$ such that 
\begin{equation}\label{equ:conv_slow_enough}
  \alpha_n\rightarrow 0 \text{ and } \frac{\rho(y,y_n)}{\alpha_n}\rightarrow 0 \text{ as } n\rightarrow \infty
\end{equation}
 does hold (e.g.~$\alpha_n = \sqrt{\rho(y,y_n)}$).
 
 Therefore $R(x_n)\leq \frac{1}{\alpha_n}T_{\alpha_n,y_n}(x^\dagger)$ for $x_n\in\argmin_{x\in X} T_{\alpha_n,y_n}(x)$ and together with~\ref{it:sublev} this yields subsequential convergence and $R(\bar{x})\leq
 R(x^\dagger)$ for every subsequential limit $\bar{x}$. Using [CONV] we get $\rho(F(x_n),y_n)\rightarrow 0$, which yields $\rho(F(\bar{x}),y)=0$ due to \ref{it:seqstructau} and~\ref{it:seqcont}.
\end{proof}
\begin{remark}
 In \cite{poeschl2008diss} it is additionally assumed that  $\rho(z,y)=0$ implies $y=z$. This allows to formulate (R3) with $R$-minimal solutions in the strict sense (i.e.~with $F(x)=y$) instead of $\rho$-generalized $R$-minimal solutions.
 
 In item~\ref{it:seqstruc} it would be sufficient if [CONT] only holds for $z\in \dom\rho(\placeholder, y)\cap F(X)$.
\end{remark}
 
As remarked earlier, it is hard to obtain necessary conditions for a
general Tikhonov scheme to be regularizing. Hence, we have chosen to start
with the analysis of the data space $Y$. This is motivated by the fact
that there are three different objects that pose additional structure
on $Y$, namely the topology $\tau_Y$, the sequential convergence
structure $\seqstruc$ and the discrepancy functional
$\rho$. Obviously, not every combination of these three objects will lead
to a regularization scheme. We start from
Theorem~\ref{thm:poeschl_flemming_example} and the
conditions~[CONV],~[CONT] and~\ref{it:seqstructau} and investigate
the interplay of $\tau_Y$, $\seqstruc$ and $\rho$ and deduce necessary
conditions on their relations. We are aware that the
conditions~[CONV],~[CONT] and~\ref{it:seqstructau} are not
necessary for a scheme to be regularizing, but they appear as natural
conditions in the context of regularization.  However, we will get two
different topologies whose convergent sequences, under appropriate circumstances, come naturally to fulfill one of the conditions 
[CONV] and [CONT], respectively,
both given in a constructive way. Moreover, they will provide means to analyze other topologies having the desired convergent sequences (see Remark
\ref{rem:we_need_Z_and_tilde_Y} for details). Applied to specific
classes of discrepancy functionals this could allow a deeper
structural insight on what [CONT] does really mean and may tackle a
subclass for which Theorem \ref{thm:poeschl_flemming_example} is
eligible without further adaptions.
  


\begin{remark}
  Every topology $\tau$ induces a sequential convergence structure $\seqstruc(\tau)$ via $y_n\conv{\seqstruc(\tau)} y$ if
  and only if $y_n\conv{\tau}y$.  
  In the further course of the paper we will say, that a topology $\tau$ satisfies [CONV] respectively [CONT]
  if and only if the sequential convergence structure induced by the topology has the respective 
  property.
\end{remark}

Now we define the two topologies mentioned above, the first one designed to satisfy [CONV],
the second to satisfy [CONT].

\begin{definition}
  \label{def:taurho-tauin}
  Let $Y$ be a set and $\rho:Y\times Y\rightarrow [0,\infty]$ such that 
  $\rho(y,y)=0$ for all $y\in Y$.
  \begin{enumerate}
  \item We call 
  \[
        \ball^\rho_\varepsilon(z):=\set{y\in Y}{\rho(z,y)<\varepsilon}
  \]
  the $\varepsilon$-ball w.r.t $\rho$ centered at $z$ and set
  \[
     \tau_\rho:=\set{U\subseteq Y}{\forall z\in U\,\exists \,\varepsilon>0\text{ such that }\ball^\rho_\varepsilon(z)\subseteq U }\,.
     \]
     \item Let $Z\subseteq Y$ and  $\tilde{Y}\subseteq Y$ and let $[0,\infty]$ be equipped with the one-point compactification of the standard
     topology on $[0,\infty[$. For $z\in Z$ we define 
              \[
                     f_z:\tilde{Y}\rightarrow [0,\infty] \text{ by } \tilde{y}\mapsto \rho(z,\tilde{y})\,.
              \]
              By $\topin$ we denote the initial topology on $\tilde{Y}$ w.r.t the family $(f_z)_{z\in Z}$
                i.e. the coarsest topology on $\tilde{Y}$ for which all the $f_z$ are continuous.
                
   \end{enumerate}
\end{definition}
Note that the notation $\topin$ does not reflect the dependency on
$\tilde Y$ and $Z$. Hence, throughout the paper we will always mention
explicitly the involved $\tilde{Y}$ and $Z$.

\begin{remark}\label{rem:we_need_Z_and_tilde_Y}
  The two additional sets $Z$ and $\tilde Y$ are introduced to allow
  to model a broader class of discrepancy functionals and to construct
  a larger variety of topologies. First, note that there are
  non-symmetric discrepancy functionals and even ones in which the
  domains of $\rho(\cdot,y)$ and $\rho(z,\cdot)$ differ. Especially,
  both arguments of $\rho$ have different meanings: The first argument
  takes images of solutions $x$ under $F$ which can have additional
  structure (e.g.~due to discretization), while the second argument
  takes measured data which may also have additional
  characteristics. Moreover, a smaller $Z$ will allow for a coarser
  topology (and this will be helpful if the range of $F$ is a
  ``small'' set) and a smaller $\tilde Y$ can model only a restrictive
  set of possible data (e.g. strictly non-negative one).
  This purpose could also be met by restricting ourselves to $\rho:Z\times\tilde{Y}\rightarrow [0,\infty]$ and
  $F:X\rightarrow Z$ for appropriately chosen $Z$ and $\tilde{Y}$ in the first place.
%

  The reason for not doing so is, that
  the topology $\topin$ is not merely designed to be itself a possible member of a 
          regularization scheme but also as a tool to analyze other topologies that induce a
          convergence structure as in Theorem 3.4 (A4).
          
          The original aim of constructing the two topologies was to derive conditions on topologies on the whole of $Y$ whose 
          sequential convergence structures fulfill both conditions demanded in Theorem 3.4 (A4) by sandwiching them between
          $\tau_\rho$ and $\topin$. In favour of this purpose we want $\tau_\rho$ to be some sort
          of maximal topology fulfilling [CONV] and $\topin$ to be minimal with the property [CONT].
          
          As we will see in Theorem \ref{thm:props_tau_on_Y} the first request is achieved easily if 
          there is any topology satisfying [CONV], the second ambition 
          is a little bit more complicated. If we took the initial topology w.r.t the familiy 
          $\{\rho(z,\cdot)\mid z\in Y\}$ we certainly would satisfy [CONT], but we would loose minimality as 
          soon as there is a $y\in Y$ with $\dom\rho(\cdot,y)\subsetneq Y$. Therefore we have to choose
          a index set $Z\subsetneq Y$ in this case to avoid more continuous $\rho(z,\cdot)$ than required by
          [CONT] and hence $\tau_{IN}$ too fine. As to $\tilde{Y}$: If there exist $y_1$, $y_2\in Y$ such that 
          $\dom\rho(\cdot, y_1)\neq \dom\rho(\cdot, y_2)$ there will not exist a $Z\subset Y$
          such that the convergence condition from [CONT] is only fulfilled for $z$ which also satisfy the finiteness
          condition. We can cure this by choosing $Z$ smaller as long as the intersection of all these domains is 
          nonempty, otherwise choosing $\tilde{Y}\subsetneq Y$ may  allow to apply the analysis at least to topological
          subspaces.
          
          Also this approach is not carried out to its full extent, we would like to leave the way open to do so.
\end{remark}

\begin{example}[Metrics and powers of norms]
  If $\rho$ be a metric on $Y$, $Z=\tilde Y=Y$ and $\seqstruc$ defined as in [CONV].  Then
  $\seqstruc$ and $\rho$ satisfy [CONT]: Let $\rho(y,y_n)\rightarrow
  0$ and $z\in Y$. Then
  \[
  \rho(z,y_n)\leq \rho(z,y)+\rho(y,y_n)\rightarrow \rho(z,y)\,.
  \]
  Clearly, the triangle inequality could be replaced by a
  quasi-triangle inequality and hence, for the popular case of
  $\rho(z,y) = \norm[Y]{z-y}^p$ of the $p$-th power of a norm (with
  $p>0$) a similar claim is valid: [CONV] says that $\seqstruc$ is the
  norm convergence (independent of the value of $p$), and hence,
  [CONT] is fulfilled.
  
  Moreover, in the cases of (quasi-)metrics and positive powers of
  norms the topologies $\tau_\rho$ and $\tau_{IN}$ coincide and are the
  metric or norm topology, respectively.
\end{example}

We may caution the reader that in general the two topologies
$\tau_\rho$ and $\tau_{IN}$ may be different, as is shown by the
following (somewhat pathological) example:
\begin{example}
  \label{ex:tauIN-taurho-pathological}
  In $Y = \tilde Y = Z = \RR^2$ consider
  \[
  \rho(z,y) = \left\{
    \begin{array}{ll}
      0 & \#\{i\ :\ y_i\neq z_i\}\leq 1\\
      1 & \text{else}.
    \end{array}\right.
  \]
  In other words, two elements are considered equal if they differ
  only in one coordinate. In this case one can show that the topology
  $\tau_\rho$ is the indiscrete topology (i.e. the only open sets are
  $Y$ and the empty set). However, $\topin$ has the subbasis
  \[
  B_{1/2}(z) = \{y\in \RR^2\ :\ \#\{i\ :\ y_i\neq z_i\}\leq 1\}
  \]
  and hence $\topin$ is finer than $\tau_\rho$. Moreover, $\topin$ has
  less convergent sequences than $\tau_\rho$ (in which every sequence
  converges to every point).
\end{example}

For the reader's convenience we recall some properties of the topologies $\tau_\rho$ and $\topin$ that will be used in the further course of the paper.
\begin{lemma}
 The following properties hold for $\tau_\rho$:
 \begin{enumerate}[label=(\roman{*})]
 \item\label{lem:prop_top1} $\tau_\rho$ is a sequential topology.
  \item\label{lem:prop_top2} A mapping from $Y$ to an arbitrary topological space is $\tau_\rho$-continuous if and only if
          it is sequentially continuous w.r.t $\tau_\rho$.
    \item\label{lem:prop_top3} $\rho(y,y_n)\rightarrow 0$ implies $y_n\conv{\tau_\rho} y$.
 \end{enumerate}
 The following holds for $\topin$:
 \begin{enumerate}[label=(\roman{*})]\setcounter{enumi}{3}
 \item\label{lem:prop_top4} For arbitrary $Z,\tilde{Y}\subseteq Y$ sequential convergence w.r.t $\topin$ can be
   characterized as follows:
   
   Let $(y_n)_{n\in \NN}$ be a sequence in $\tilde{Y}$ and $y\in
   \tilde{Y}$.  Then $y_n\conv{\topin} y $ if and only if
   $\rho(z,y_n)\rightarrow \rho(z,y)$ for all $z\in Z$.
   
 \item\label{lem:prop_top5} If additionally $\tilde{Y}\subseteq Z$ does hold,
   $y_n\conv{\topin} y $ implies $\rho(y,y_n)\rightarrow 0$.
 \end{enumerate}
\end{lemma}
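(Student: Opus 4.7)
The plan is to establish the five claims in the order (iii), (i), (ii), (iv), (v), since each one feeds into the next (though (iv) and (v) are independent of the first three). A preliminary sanity check is that $\tau_\rho$ really is a topology on $Y$; this is routine because closure under arbitrary unions is immediate from the pointwise ball condition, and for a finite intersection one takes the minimum of the witnessing radii.

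For (iii) I would argue directly from the definitions. If $\rho(y,y_n)\to 0$ and $U\in\tau_\rho$ contains $y$, then by definition of $\tau_\rho$ there is $\varepsilon>0$ with $\ball^\rho_\varepsilon(y)\subseteq U$; eventually $\rho(y,y_n)<\varepsilon$, hence $y_n\in\ball^\rho_\varepsilon(y)\subseteq U$, which gives $y_n\conv{\tau_\rho} y$. For (i) I would then show that every sequentially closed $C\subseteq Y$ is $\tau_\rho$-closed: fix $z\notin C$; if $\ball^\rho_{1/n}(z)\cap C\neq\emptyset$ for every $n\in\NN$, choosing $y_n$ in each intersection yields $\rho(z,y_n)<1/n\to 0$, so $y_n\conv{\tau_\rho} z$ by (iii), and sequential closedness would force $z\in C$, a contradiction. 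Consequently some $\ball^\rho_\varepsilon(z)$ misses $C$, which is exactly the condition for $Y\setminus C\in\tau_\rho$.

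Item (ii) is then a formal consequence of (i): the forward direction is the usual preservation of convergent sequences, and conversely, if $g\colon Y\to Y'$ sends $\tau_\rho$-convergent sequences to convergent sequences and $C'\subseteq Y'$ is closed, then $g^{-1}(C')$ is sequentially closed in $\tau_\rho$, hence $\tau_\rho$-closed by (i). For (iv) I would invoke the standard sequential description of an initial topology: a subbase of $\topin$ is given by the preimages $f_z^{-1}(V)$ with $z\in Z$ and $V\subseteq[0,\infty]$ open, and a sequence converges in $\topin$ iff it eventually lies in every basic (finite-intersection) neighbourhood of the limit, which unwinds to $\rho(z,y_n)=f_z(y_n)\to f_z(y)=\rho(z,y)$ for each $z\in Z$. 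Finally, (v) follows from (iv) by plugging in $z=y$, an option that only becomes available when $\tilde Y\subseteq Z$ puts $y$ itself into the index set; the assumption $\rho(y,y)=0$ then delivers the desired convergence to zero.

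The only delicate point is (i): because $\rho$ need not satisfy any triangle-type inequality, the balls $\ball^\rho_\varepsilon(z)$ themselves need not be $\tau_\rho$-open, so the topology is not automatically first countable in the usual sense of having a countable base of open neighbourhoods. The argument above sidesteps this by using the countable family of (possibly non-open) balls $\ball^\rho_{1/n}(z)$ purely as a device to extract a witness sequence inside $C$, which is just enough to run the standard sequentiality argument without ever requiring the balls to be open.
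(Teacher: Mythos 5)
Your proposal is correct and follows essentially the same route as the paper: (iii) from the definition of $\tau_\rho$-open sets, (ii) as a formal consequence of sequentiality, (iv) via the subbase $\set{f_z^{-1}(V)}{z\in Z,\ V\text{ open}}$ of the initial topology, and (v) by evaluating $f_y$ at $y$. The only difference is that for (i) the paper simply cites Arkhangel'skii, whereas you supply a short direct argument (extracting a witness sequence from the non-open balls $\ball^\rho_{1/n}(z)$), which is sound and correctly flags the one subtlety, namely that these balls need not themselves be $\tau_\rho$-open.
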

\begin{proof}
  For \textit{\ref{lem:prop_top1}}~see \cite[\S 2.4]{Arkhangelski1990basic.concepts.general.topology}.
  Item \textit{\ref{lem:prop_top2}}~is a direct consequence of $\tau_\rho$ being sequential and
  \textit{\ref{lem:prop_top3}}~is clear from the definition of open sets w.r.t.~$\tau_\rho$. Then, the first implication of
  \textit{\ref{lem:prop_top4}}~is due to the sequential continuity of continuous maps and the converse holds because the set $\set{f_z^{-1}(V)}{z\in Z,\,V\subseteq [0,\infty]\text{ open}}$ is a subbase for $\topin$. Finally, \textit{\ref{lem:prop_top5}}~is the continuity of $f_y$ at $y$.
\end{proof}

Now we investigate the relation of $\tau_\rho$ to the property [CONV].
\begin{theorem}
  \label{thm:props_tau_on_Y}
 Let $\tau$ be a topology on $Y$. Then the following does hold:
 \begin{enumerate}
   \item The property
          \begin{equation}\label{equ:conv_impl}
            \rho(y,y_n)\rightarrow 0 \text{ implies } y_n\conv{\tau}y
            \end{equation}
             does hold if and only if
             $\tau$ is coarser than $\tau_\rho$.
   \item If $\tau$ has property [CONV], then so does $\tau_\rho$. In particular
            $\tau_\rho$ is the finest topology with that property.
 \end{enumerate}
\end{theorem}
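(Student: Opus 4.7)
The plan is to prove (i) first and then bootstrap (ii) from it. For the forward direction of (i), suppose $\tau\subseteq\tau_\rho$. By Lemma \ref{lem:prop_top3}, $\rho(y,y_n)\to 0$ already yields $y_n\conv{\tau_\rho} y$; since every $\tau$-neighborhood of $y$ is then a $\tau_\rho$-neighborhood of $y$, convergence in $\tau_\rho$ automatically descends to convergence in the coarser $\tau$, giving~(\ref{equ:conv_impl}).

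For the converse of (i), I would argue by contradiction on the level of open sets. Assume~(\ref{equ:conv_impl}) and pick any $U\in\tau$. To show $U\in\tau_\rho$, take $z\in U$ and suppose for contradiction that no ball $\ball^\rho_\varepsilon(z)$ is contained in $U$. Choosing $y_n\in\ball^\rho_{1/n}(z)\setminus U$ for each $n\in\NN$ yields $\rho(z,y_n)\to 0$, so by hypothesis $y_n\conv{\tau} z$. But $U$ is an open $\tau$-neighborhood of $z$, so eventually $y_n\in U$, contradicting the construction. Hence every $z\in U$ admits a $\rho$-ball inside $U$, so $U\in\tau_\rho$.

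For (ii), observe that [CONV] for $\tau$ contains the implication in~(\ref{equ:conv_impl}) as one of its two directions, so part (i) immediately gives $\tau\subseteq\tau_\rho$. To verify [CONV] for $\tau_\rho$ itself, the ``$\Leftarrow$'' direction is Lemma \ref{lem:prop_top3}, while for ``$\Rightarrow$'' a $\tau_\rho$-convergent sequence is in particular $\tau$-convergent (as $\tau$ is coarser), and [CONV] for $\tau$ then supplies $\rho(y,y_n)\to 0$. The finest-topology claim is just this same argument with $\tau$ replaced by any topology $\tau'$ satisfying [CONV]: such $\tau'$ is automatically contained in $\tau_\rho$ by (i).

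There is no real technical obstacle here; the proof is essentially tautological once one recognizes that $\tau_\rho$ is built precisely to make the $\rho$-balls neighborhood-bases. The only point requiring care is the direction of the coarser/finer inclusion: a coarser topology admits \emph{more} convergent sequences, so the inclusion $\tau\subseteq\tau_\rho$ is what transports $\tau_\rho$-convergence onto $\tau$-convergence (not the other way around), and this is what allows the finest-topology statement to be extracted cleanly from~(\ref{equ:conv_impl}).
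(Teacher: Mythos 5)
Your proposal is correct and follows essentially the same route as the paper: the forward direction of (i) via the descent of $\tau_\rho$-convergence to a coarser topology, the converse via the contradiction argument with $y_n\in\ball^\rho_{1/n}(z)\setminus U$, and (ii) by combining part (i) with Lemma 3.10\,(iii) and the coarseness inclusion. No gaps.
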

\begin{proof}
  \begin{enumerate}
     \item Let $\tau$ be coarser than $\tau_\rho$, then every $\tau_\rho$-convergent
             sequence is also $\tau$-convergent, and therefore (\ref {equ:conv_impl}) does
             hold.
             
             Now let $\tau$ be a topology where  (\ref {equ:conv_impl}) does hold. Suppose there exists $U\in\tau$ and $U\not\in\tau_\rho$. Then there is an $u\in U$ such that for all $n\in \NN$ there exists a $y_n\in\ball_{\frac{1}{n}}(u)\setminus U$. Evidently $\rho(u,y_n)\rightarrow 0$ does hold and since (\ref{equ:conv_impl}) does hold w.r.t $\tau$, this implies $y_n\conv{\tau}u$ in contradiction to $y_n\not\in U$ for all $n\in\NN$.
      \item Let $\tau_Y$ be a topology that fulfills [CONV] and $(y_n)$ a $\tau_\rho$ convergent sequence with limit $y$. Due to (i)~$\tau$ is coarser than $\tau_\rho$, therefore
      $y_n\conv{\tau_Y} y$ and consequently $\rho(y,y_n)\rightarrow 0$.
  \end{enumerate}
\end{proof}
So, if $\mathcal{S}$ is induced by a topology at all, this is also done by the relatively well-behaved
(i.e. sequential) topology $\tau_\rho$.

One case in which this applies is marked out in the following remark:
\begin{remark}\label{rem:topological_if_unique_limits}
 If $\seqstruc$ provides unique limits, it is induced by a topology: Because
a sequence $\mathcal{S}$-converges given all its subsequences have a subsequence tending to the same limit, this is guaranteed e.g. by
\cite[Prop.~1.7.15]{beattie2002convergence}, \cite{kisynyki1959convergencedutypeL}.
\end{remark}

We have two further immediate consequences of Theorem~\ref{thm:props_tau_on_Y}:
\begin{corollary}
  Let (A4) of Theorem~\ref{thm:poeschl_flemming_example} hold. 
  Then (A5) of Theorem~\ref{thm:poeschl_flemming_example} does hold for a topology 
  $\tau_Y$ on $Y$ if and only if
  $\tau_Y$ is coarser than $\tau_\rho$.
\end{corollary}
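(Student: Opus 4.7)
The plan is to observe that, under assumption (A4), condition (A5) is essentially a restatement of the hypothesis of Theorem~\ref{thm:props_tau_on_Y}(i), so the corollary reduces to a direct application of that result.

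More precisely, I would first unpack what (A4) contributes. The property [CONV] inside (A4) asserts
\[
y_n \conv{\seqstruc} y \quad \Longleftrightarrow \quad \rho(y, y_n) \to 0.
\]
Substituting this equivalence into the left-hand side of the implication in (A5) rewrites (A5) as the statement: for all $y \in Y$ and all sequences $(y_n)$ in $Y$,
\[
\rho(y, y_n) \to 0 \;\; \Longrightarrow \;\; y_n \conv{\tau_Y} y.
\]
This is exactly condition~(\ref{equ:conv_impl}) appearing in Theorem~\ref{thm:props_tau_on_Y}(i).

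Next, I would simply invoke Theorem~\ref{thm:props_tau_on_Y}(i), which establishes that~(\ref{equ:conv_impl}) characterizes precisely those topologies $\tau_Y$ on $Y$ that are coarser than $\tau_\rho$. This delivers both directions of the equivalence in the corollary at once.

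There is no real obstacle here: once the [CONV] half of (A4) is used to translate $\seqstruc$-convergence into convergence of the discrepancy values $\rho(y, y_n)$, the claim follows immediately from the already-established Theorem~\ref{thm:props_tau_on_Y}(i). The role of the corollary is really to package that theorem in the language of the regularization-scheme assumptions of Theorem~\ref{thm:poeschl_flemming_example}, emphasizing that $\tau_\rho$ plays the role of an upper envelope among all topologies compatible with (A5) given (A4).
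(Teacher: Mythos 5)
Your proposal is correct and is exactly the argument the paper intends: the corollary is stated as an immediate consequence of Theorem~\ref{thm:props_tau_on_Y}, and your reduction --- using the [CONV] part of (A4) to identify (A5) with condition~(\ref{equ:conv_impl}) and then invoking Theorem~\ref{thm:props_tau_on_Y}(i) --- is precisely that intended derivation.
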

In the case of $\rho(z,y) = \norm{z-y}^p$ of Banach space norm, this
means that $\tau_Y$ is coarser than the norm topology, i.e. this
condition which has been required previously
(cf.~\cite{hofmann2007convtikban}) is somehow necessitated.
\begin{corollary} If there is a topology $\tau$ where $\rho(y,y_n)\rightarrow 0$ implies
  $y_n\conv{\tau}y$ such that [CONT] is fulfilled, then $\tau_\rho$ also fulfills
  [CONT].
\end{corollary}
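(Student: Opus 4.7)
The plan is to combine part (i) of Theorem \ref{thm:props_tau_on_Y} with the fact that convergence is preserved when passing to a coarser topology, and then invoke [CONT] for $\tau$ directly.

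First I would observe that the assumption ``$\rho(y,y_n)\to 0$ implies $y_n\conv{\tau}y$'' is precisely condition (\ref{equ:conv_impl}) of Theorem \ref{thm:props_tau_on_Y}, which by part (i) of that theorem is equivalent to $\tau$ being coarser than $\tau_\rho$. So at this point I have for free that every $\tau_\rho$-convergent sequence is $\tau$-convergent to the same limit.

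Next, to verify [CONT] for $\tau_\rho$, I would take an arbitrary sequence $(y_n)$ in $Y$ and $y\in Y$ with $y_n\conv{\tau_\rho}y$, together with an arbitrary $z\in\dom\rho(\,\cdot\,,y)$. Because $\tau\subseteq\tau_\rho$, the sequence also satisfies $y_n\conv{\tau}y$. Applying [CONT] for $\tau$ to this convergence then yields $\rho(z,y_n)\to\rho(z,y)$, which is exactly what has to be shown for $\tau_\rho$.

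There is essentially no obstacle here; the statement is a direct consequence of the ``maximality'' of $\tau_\rho$ in Theorem \ref{thm:props_tau_on_Y}, which forces any topology whose convergent sequences include all sequences with $\rho(y,y_n)\to 0$ to sit below $\tau_\rho$, so that $\tau_\rho$ inherits every sequential continuity property already enjoyed by $\tau$. The only subtlety worth mentioning explicitly in the write-up is that [CONT] is a property of sequential convergence alone, so it really does transfer along the coarser/finer comparison of the two topologies without any additional regularity assumption on $\tau$ or $\tau_\rho$.
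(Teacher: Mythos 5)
Your proof is correct and follows exactly the route the paper intends: the paper states this corollary as an immediate consequence of Theorem~\ref{thm:props_tau_on_Y}, and the implicit argument is precisely yours — part (i) forces $\tau$ to be coarser than $\tau_\rho$, so every $\tau_\rho$-convergent sequence is $\tau$-convergent to the same limit and [CONT] for $\tau$ applies directly. Nothing is missing.
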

Since we are only interested in sequential convergence, this allows us to take $\tau_\rho$
as a sort of model topology.
\begin{remark} In general, the set $\tau_\seqstruc$ of all sequentially open sets w.r.t to a sequential convergence structure $\seqstruc$ on $Y$ is a topology on $Y$.
   As has been shown in \cite[Prop. 2.10]{flemming2011diss}, in the case that $\seqstruc$ is given by [CONV], it is sufficient for [CONV] to hold for the topology $\tau_\seqstruc$ as well, that $\seqstruc$ fulfills [CONT].
   
   Therefore assumption \ref{it:seqstruc} implies that $\tau_\rho$ also has [CONV] and this again
   implies that $\tau_\mathcal{S}=\tau_\rho$, since $\tau_\rho$ is sequential. Moreover, in this case the sets  $\ball^\rho_\varepsilon(y)$
   are open for all $\varepsilon >0$, $y\in Y$  (see also \cite{flemming2011diss}) and therefore constitute a base for $\tau_\rho$.
\end{remark}

The next theorem deals with the question what consequences it has if [CONT] does
hold in $\tau_\rho$.
\begin{theorem}\label{thm:tau_rho_has_CONV} Let $Z\subseteq\bigcap_{y\in Y}\dom\rho(\placeholder,y)$
  be nonempty and $\tilde{Y}=Y$.
 
 If $\tau_\rho$ fulfills [CONT] then the following does hold: 
  \begin{enumerate}
    \item$\topin$ is coarser than $\tau_\rho$ 
    \item If $Z=Y$ then $\tau_\rho$ and $\topin$ both satisfy [CONV]. In particular they have the same convergent sequences.
   \end{enumerate}
\end{theorem}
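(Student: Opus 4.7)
The plan is to treat the two parts in order, since part (ii) will reuse part (i).

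For part (i), the strategy is to verify that $\topin$ is coarser than $\tau_\rho$ via the universal property of initial topologies: it suffices to show that each $f_z\colon Y\to[0,\infty]$, $y\mapsto\rho(z,y)$, with $z\in Z$, is continuous with respect to $\tau_\rho$. By item~\textit{\ref{lem:prop_top2}} of the preceding lemma, this is the same as sequential continuity with respect to $\tau_\rho$. Take a sequence $y_n\conv{\tau_\rho}y$; then the standing hypothesis $Z\subseteq\bigcap_{y\in Y}\dom\rho(\placeholder,y)$ guarantees $z\in\dom\rho(\placeholder,y)$, and the assumption that $\tau_\rho$ fulfills [CONT] yields $\rho(z,y_n)\to\rho(z,y)$, i.e.\ $f_z(y_n)\to f_z(y)$. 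So every $f_z$ is sequentially continuous, hence continuous, and $\topin$ is coarser than $\tau_\rho$.

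For part (ii), first I would show that $\tau_\rho$ satisfies [CONV]. The implication $\rho(y,y_n)\to 0\Rightarrow y_n\conv{\tau_\rho}y$ is item~\textit{\ref{lem:prop_top3}} of the lemma and needs no extra hypothesis. For the converse, suppose $y_n\conv{\tau_\rho}y$. Because $Z=Y$, the element $z:=y$ lies in $Z$, and by the standing hypothesis on $Z$ we also have $y\in\dom\rho(\placeholder,y)$ (alternatively, $\rho(y,y)=0<\infty$). Applying [CONT] for $\tau_\rho$ at this choice of $z$ gives $\rho(y,y_n)\to\rho(y,y)=0$, which is the missing direction.

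It remains to show that $\topin$ also satisfies [CONV] and that the two topologies share their convergent sequences. If $y_n\conv{\topin}y$, then since $\tilde Y=Y\subseteq Z$ (using $Z=Y$), item~\textit{\ref{lem:prop_top5}} immediately yields $\rho(y,y_n)\to 0$. Conversely, assume $\rho(y,y_n)\to 0$: by item~\textit{\ref{lem:prop_top3}} we get $y_n\conv{\tau_\rho}y$, and by part (i) this implies $y_n\conv{\topin}y$. Hence both topologies characterize convergence by the condition $\rho(y,y_n)\to 0$, so they admit the same convergent sequences. The only real subtlety is making sure one has the right ``$z$'' to feed into [CONT] for each direction; everything else is a straightforward bookkeeping with the lemma just stated.
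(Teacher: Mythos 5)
Your proposal is correct and follows essentially the same route as the paper: part (i) via sequential continuity of each $f_z$ (using [CONT] and the fact that $\tau_\rho$ is sequential), and part (ii) by combining part (i) with items \textit{(iii)} and \textit{(v)} of the lemma. The only cosmetic difference is the order in part (ii) — you establish [CONV] for $\tau_\rho$ first (applying [CONT] at $z=y$, which is legitimate since $\rho(y,y)=0<\infty$) and then transfer it to $\topin$, whereas the paper does $\topin$ first and deduces $\tau_\rho$ afterwards; the ingredients are identical.
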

\begin{proof}\ 

 \begin{enumerate}
   \item Since $\rho(z,\placeholder)$ is sequentially continuous for all $z\in Z$, it is also 
            continuous and therefore $\topin$ is coarser than $\tau_\rho$.
    \item Due to (i)~convergence w.r.t.~$\tau_\rho$ yields convergence w.r.t..~$\topin$ and hence
             $\rho(y,y_n)\rightarrow 0$ implies $y_n\conv{\topin}y$. Since $Y\subseteq Z$ the 
             converse is also true and therefore $\topin$ satisfies [CONV], and so does $\tau_\rho$.
 \end{enumerate}
\end{proof}

\begin{remark}
 If $\tilde{Y}\subseteq Y$ and $(\tau_\rho)_{\mid\tilde{Y}}$ is
 sequential (e.g. if $\tilde{Y}$ open or closed w.r.t $\tau_\rho$, see \cite{Franklin1965sequencessuffice}), then $(\tau_\rho)_{\mid\tilde{Y}}=\tau_{\rho_{\mid\tilde{Y}}}$ does hold.  In a setting where 
 $\tilde{Y}\subsetneq Z\subseteq Y$, this together with Theorem \ref{thm:tau_rho_has_CONV} would still guarantee, that $\topin$ and the
 subspace topology of $\tau_\rho$ on $\tilde{Y}$ provide the same convergent sequences.

 If $\rho(z,\placeholder)$ is $\tau_\rho$-continuous at every $y\in Y$ for all $z\in Z$ regardless of the finiteness condition in [CONT],
 then we can drop the assumption $Z\subseteq\bigcap_{y\in Y}\dom\rho(\placeholder,y)$ in Theorem \ref{thm:tau_rho_has_CONV}.
\end{remark}

So, in the setting of Theorem~\ref{thm:tau_rho_has_CONV} sequential
convergence in $\tau_\rho$ and $\topin$ coincides. In general the
sequential convergence structures of these topologies can be different
from each other, cf.~Example~\ref{ex:tauIN-taurho-pathological}.

\subsection{Application to Bregman discrepancies}
We conclude Section~\ref{sec:nec-cond} by an application to a special
class of discrepancy functionals, namely ones that stem from Bregman
distances which appear, e.g., in the case of Poisson noise or
multiplicative
noise~\cite{resmerita2005regbanspaces,benning2011errorestimates,luke2012linconvapprox}.
Especially, this gives an example that illustrates how
Theorem~\ref{thm:tau_rho_has_CONV} can be used to gain necessary
conditions on the discrepancy functional for
Theorem~\ref{thm:poeschl_flemming_example} to apply.

Also we treat the
question, when $\rho(y_1,y_2)=0$ implies $y_1=y_2$ in this case.

In the following let $V$ be a Banach space and
$J:V\rightarrow[0,\infty]$ proper, convex, $Z=Y=\dom J$ and
$\tilde{Y}\subseteq\set{y\in Y}{J \text{ has a single valued
    subdifferential at } y}$. The mapping which maps to the unique subgradient of
$J$ is denoted by $\nabla J$.  As distance functional $\rho$ we
consider the Bregman distance w.r.t.~to $J$, i.e., for
$(z,y)\in Y\times \tilde Y$ the functional
\[
D_J(z,y)=J(z)-J(y)-\inner{\nabla J(y)}{z-y}.
\]
\begin{lemma}
  \label{lem:definiteness-DJ}
  Let $y_1,y_2\in \tilde{Y}$. Then $D_J(y_1,y_2)=0$ if and only $\nabla
  J(y_1)=\nabla J(y_2)$. In the case $\tilde Y = V$ the property
  \[
  D_J(y_1,y_2)=0 \Rightarrow y_1=y_2 \text{ for all } y_1,y_2\in V
  \]
  does hold if and only if $J$ is strictly convex.
\end{lemma}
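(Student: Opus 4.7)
My plan is to handle the two claims separately, both ultimately resting on equality in the subgradient inequality. For the first equivalence I would observe that $D_J(y_1,y_2)=0$ is exactly the statement that the subgradient inequality $J(z)\ge J(y_2)+\inner{\nabla J(y_2)}{z-y_2}$ (which holds for every $z$ because $\nabla J(y_2)\in\subgrad J(y_2)$) becomes an equality at $z=y_1$. Such equality is equivalent to $\nabla J(y_2)$ being itself a subgradient of $J$ at $y_1$, and since $y_1\in\tilde Y$ the set $\subgrad J(y_1)$ is a singleton, forcing $\nabla J(y_1)=\nabla J(y_2)$. For the converse I would simply symmetrise:
\[
D_J(y_1,y_2)+D_J(y_2,y_1)=\inner{\nabla J(y_1)-\nabla J(y_2)}{y_1-y_2},
\]
which vanishes when the two gradients coincide; nonnegativity of each summand then yields $D_J(y_1,y_2)=0$.

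For the second part, the direction ``$J$ strictly convex implies definiteness'' is the easier one. Assuming $D_J(y_1,y_2)=0$ with $y_1\ne y_2$ I would pick $t\in(0,1)$, set $w_t=ty_1+(1-t)y_2$, and combine strict convexity $J(w_t)<tJ(y_1)+(1-t)J(y_2)$ with the equality $J(y_1)=J(y_2)+\inner{\nabla J(y_2)}{y_1-y_2}$ coming from $D_J(y_1,y_2)=0$. Rearranging gives $J(w_t)<J(y_2)+\inner{\nabla J(y_2)}{w_t-y_2}$, contradicting the subgradient inequality applied at $w_t$.

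The converse is the more delicate step and is where I expect the real work to sit. I would argue by contrapositive: if $J$ is not strictly convex, there exist $u\ne v$ and $\lambda\in(0,1)$ with $J(\lambda u+(1-\lambda)v)=\lambda J(u)+(1-\lambda)J(v)$. The key technical point is to upgrade this single equality to affineness of $J$ on the whole segment $[u,v]$; I would do this by restricting to the line $f(t):=J((1-t)u+tv)$, which is convex on $[0,1]$, and invoking the standard one-variable fact that a convex function meeting its secant at an interior point coincides with it throughout. Setting $w=\lambda u+(1-\lambda)v$, the assumption $\tilde Y=V$ guarantees that $\nabla J(w)$ exists, and the affine behaviour along $[u,v]$ forces $\inner{\nabla J(w)}{u-v}=J(u)-J(v)$. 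A short direct computation then produces $D_J(u,w)=0$ while $u\ne w$, contradicting the assumed implication. The main obstacle is the extraction of an affine segment from a pointwise equality, which is a purely one-dimensional input rather than anything Bregman-specific.
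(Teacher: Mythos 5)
Your proof of the first equivalence is essentially the paper's: forward direction by recognizing $D_J(y_1,y_2)=0$ as equality in the subgradient inequality, hence $\nabla J(y_2)\in\subgrad J(y_1)$, hence equality by single-valuedness; converse by the symmetrized identity $D_J(y_1,y_2)+D_J(y_2,y_1)=\inner{\nabla J(y_1)-\nabla J(y_2)}{y_1-y_2}$ plus nonnegativity. For the second claim you organize things differently. The paper first uses the just-proved equivalence to reduce the statement to ``$J$ strictly convex iff $\nabla J$ is injective'' and then argues about gradients: injectivity from adding two strict subgradient inequalities, and non-injectivity from a pair $y\neq z$ with $J(y)-J(z)=\inner{\nabla J(z)}{y-z}$. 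You instead work directly with $D_J$: strict convexity kills a nontrivial zero of $D_J$ by evaluating at an interior point $w_t$ of the segment, and conversely a failure of strict convexity produces $u\neq w$ with $D_J(u,w)=0$. Both routes are correct and rest on the same two computations; yours has the merit of making explicit the step the paper glosses over, namely how a single equality $J(\lambda u+(1-\lambda)v)=\lambda J(u)+(1-\lambda)J(v)$ yields equality in a subgradient inequality at a pair of distinct points. One simplification: you do not actually need to upgrade to affineness of $J$ on all of $[u,v]$. Writing $w=\lambda u+(1-\lambda)v$, the two subgradient inequalities $\inner{\nabla J(w)}{u-w}\leq J(u)-J(w)$ and $\inner{\nabla J(w)}{v-w}\leq J(v)-J(w)$, multiplied by $\lambda$ and $1-\lambda$ and added, sum to $0\leq 0$ by the single-point equality, so each must be an equality, giving $D_J(u,w)=0$ directly; the one-dimensional secant lemma is not needed.
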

\begin{proof}
  First let $\rho(y_1,y_2)=0$. Then $J(y_1)=J(y_2)+\inner{\nabla
    J(y_2)}{y_1-y_2}$ and hence linearity of $\nabla J(y_2)$ and
  nonnegativity of $\rho$ imply $J(v)-J(y_1)-\inner{\nabla
    J(y_2)}{v-y_1}=\rho(v,y_2)\geq 0$ for all $v\in V$. Therefore
  $\nabla J(y_2)$ is a subgradient of $J$ in $y_1$. Since the subgradient of $J$ is
  single valued at $y_1$ this yields $\nabla J(y_2)=\nabla J(y_1)$.
    
  Now let $\nabla J(y_2)=\nabla J(y_1)$. Then $0\geq
  -\rho(y_1,y_2)=\rho(y_2,y_1)\geq 0$.
  
  For the second statement note that we only need to show that a
  function is strictly convex if and only if the mapping $\nabla J$ is
  injective. Assume that $J$ is strictly convex but that there is
  $\xi=\nabla J(y)=\nabla J(z)$ for $y\neq z$. Plugging $y$ and $z$ in
  the respective (strict) subgradient inequalities gives
  \[
  \begin{array}{rl}
    J(y) - J(z) &>\inner{\xi}{y-z}\\
    J(z) - J(y) &>\inner{\xi}{z-y}
  \end{array}
  \]
  and adding both inequalities we arrive at the contradiction
  $0>0$. Moreover, assuming that $J$ is not strictly convex, there are
  $y\neq z$ such that $J(y)-J(z) = \inner{\nabla J(z)}{y-z}$. But then
  is holds for all $z'$ that $J(z') - J(y) - \inner{\nabla J(z)}{z'-y}
  = J(z') - J(z) - \inner{\nabla J(z)}{z'-z}\geq 0$ which shows that
  $\nabla J(y) = \nabla J(z)$, i.e. that $\nabla J$ can not be
  injective.
\end{proof}

The Bregman distance $D_J$ is in general not symmetric and the
behavior in both coordinates can be quite different (e.g., $D_J$ is
always convex in the first coordinate but not necessarily so for the
second). In the literature, both the discrepancies
\[
\rho_1(F(x),y) = D_J(F(x),y)
\]
and
\[
\rho_2(F(x),y) = D_J(y,F(x))
\]
are used (see~\cite{luke2012linconvapprox} for the
first variant
and~\cite{resmerita2005regbanspaces,benning2011errorestimates} for the
second).

First, we analyze the variant $\rho_1(z,y) = D_J(z,y)$ which corresponds to the Tikhonov function $T_{\alpha,y}(x) = D_J(F(x),y)+\alpha R(x)$.
The following lemma explores how convergence w.r.t $\topin$ actually looks like.
\begin{lemma}
  \label{lem:tauIN_characterization}
  For all sequences $(y_n)$ in $\tilde{Y}$,
  $y\in\tilde{Y}$ the following does hold: $y_n\conv{\topin} y$ if and only if  $\rho_1(y, y_n)\rightarrow 0$ and $\inner{\nabla J(y_n)-\nabla J(y)}{y-z}\rightarrow 0$ for all $z\in Z$.
     Moreover, 
     $y_n\conv{\topin}y$ if and only if 
    $\rho_1(y,y_n)\rightarrow 0$ and $\nabla J(y_n)\wstarto\nabla J(y)$ in $\spann(Z)^\ast$. In particular
    $(\nabla J)_{\mid \tilde{Y}}:\tilde{Y}\rightarrow \spann(Z)^\ast$ is sequentially $\topin$-weak* continuous.
\end{lemma}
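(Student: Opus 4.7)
The plan is to reduce everything to the characterization of $\topin$-convergence from part~(iv) of the preceding lemma, namely that $y_n \conv{\topin} y$ if and only if $\rho_1(z, y_n) \to \rho_1(z, y)$ for every $z \in Z$. First I would expand the Bregman distance directly and simplify to obtain the key algebraic identity
\[
D_J(z, y_n) - D_J(z, y) = D_J(y, y_n) + \inner{\nabla J(y_n) - \nabla J(y)}{y - z},
\]
valid for all $z \in Z$ and all $y, y_n \in \tilde Y$ (the computation is routine, relying only on the definition of $D_J$ and collecting terms involving $\nabla J(y)$, $\nabla J(y_n)$, $J(y)$ and $J(y_n)$).

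For the first equivalence, I would note that $y \in \tilde Y \subseteq Y = Z$, so one may plug $z = y$ into the identity: the pairing vanishes and the left hand side equals $\rho_1(y, y_n)$, so $\topin$-convergence forces $\rho_1(y, y_n) \to 0$. Feeding this back into the identity for arbitrary $z \in Z$ forces $\inner{\nabla J(y_n) - \nabla J(y)}{y - z} \to 0$ for every $z \in Z$. The converse direction is immediate from the identity: once both summands on the right tend to zero, the left hand side does too, which is exactly the $\topin$-characterization.

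For the reformulation as weak* convergence, I would argue that by linearity of the dual pairing the condition $\inner{\nabla J(y_n) - \nabla J(y)}{y - z} \to 0$ for every $z \in Z$ extends to vanishing on $\spann\{y - z : z \in Z\}$, and since $y \in Z$ this span coincides with $\spann(Z)$; the reverse implication is immediate since $y - z \in \spann(Z)$ for each $z \in Z$. The sequential $\topin$-to-weak* continuity of $\nabla J|_{\tilde Y}$ then follows as an immediate consequence. The step I expect to require the most care is this final span identification: one has to check that linear combinations of the shifted differences $y - z$ indeed recover every element of $\spann(Z)$, which ultimately rests on the mild nondegeneracy of $Z = \dom J$ together with $y \in Z$.
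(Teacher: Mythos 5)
Your proof of the first equivalence is exactly the paper's: the same identity
\[
\rho_1(z,y_n)-\rho_1(z,y)=\rho_1(y,y_n)+\inner{\nabla J(y_n)-\nabla J(y)}{y-z},
\]
the same substitution $z=y$ (legitimate since $\tilde Y\subseteq Y=Z=\dom J$, so all quantities are finite and convergence in $[0,\infty]$ is ordinary convergence), and the same back-substitution for general $z$. No issues there.

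For the ``Moreover'' part the paper gives no proof at all, so you are on your own, and here your span identification is not quite right as stated. Linearity of the pairing gives vanishing of $\inner{\nabla J(y_n)-\nabla J(y)}{\placeholder}$ on $\spann\set{y-z}{z\in Z}$, and since $y\in Z$ this set of shifted differences generates exactly $\spann\set{z_1-z_2}{z_1,z_2\in Z}$ --- the span of \emph{differences} of elements of $Z$ --- which can be a proper (codimension-one) subspace of $\spann(Z)$: take $\dom J$ contained in an affine subspace not through the origin, e.g.\ $Z=\set{(1,t)}{t\in\RR}\subseteq\RR^2$, where the differences span only a line while $\spann(Z)=\RR^2$. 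So ``$y\in Z$'' alone does not close the gap; you need an additional nondegeneracy hypothesis such as $0\in\dom J$ (which holds in the Kullback--Leibler application, where the zero function lies in $\dom J$ and hence $y=y-0$ is itself a shifted difference) or, more generally, $\spann(Z-Z)=\spann(Z)$. You correctly flag this as the delicate step, but the justification you offer for it does not suffice; to be fair, the paper's statement silently assumes the same thing. The reverse implication of the ``Moreover'' and the final continuity assertion are immediate, as you say, since $y-z\in\spann(Z)$ for every $z\in Z$.
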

\begin{proof}
   The identity
    $ \rho_1(z,y_n)-\rho_1(z,y)=\rho_1(y,y_n)+\inner{\nabla J(y_n)-\nabla J(y)}{y-z}$
    does hold for all $z\in Z$.
    
    So clearly $\rho_1(y,y_n)\rightarrow 0$ and $ \inner{\nabla J(y_n)-\nabla J(y)}{y-z}\rightarrow 0$ imply $y_n\conv{\topin} y$.
    
    Conversely, let $y_n\conv{\topin} y$ hold. Then $\rho_1(y,y_n)\rightarrow 0$ and hence
    $0=\lim_{n\conv{}\infty}(\rho_1(z,y_n)-\rho_1(z,y)-\rho_1(y,y_n))=\lim_{n\conv{}\infty}\inner{\nabla J(y_n)-\nabla J(y)}{y-z}$.
\end{proof}
\begin{corollary}\label{cor:abl_schwachstern_stetig}
  Let $\dom J=\tilde Y = V$ .
 
  \begin{enumerate}
     \item If $\tau_{\rho_1}$ satisfies [CONT], then $\nabla J$ is $\tau_{\rho_1}$-weak* continuous. 
     \item $\topin$ provides unique sequential limits if and only if $J$ is strictly convex.
  \end{enumerate}
\end{corollary}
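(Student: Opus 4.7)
The plan for (i) is to chain Theorem \ref{thm:tau_rho_has_CONV} together with Lemma \ref{lem:tauIN_characterization}. Under the standing hypothesis $\dom J=\tilde Y=V$ one has $Z=Y=V$, so the assumption $Z\subseteq\bigcap_{y\in Y}\dom\rho_1(\placeholder,y)$ of Theorem \ref{thm:tau_rho_has_CONV} is trivially satisfied, and since $\tau_{\rho_1}$ has property [CONT] by hypothesis, part (ii) of that theorem applies and yields that $\tau_{\rho_1}$ and $\topin$ admit the same convergent sequences. Lemma \ref{lem:tauIN_characterization} already delivers that $\nabla J$ is sequentially $\topin$-weak$^\ast$ continuous on $\tilde Y$, hence it is also sequentially $\tau_{\rho_1}$-weak$^\ast$ continuous. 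Since $\tau_{\rho_1}$ is a sequential topology (item \ref{lem:prop_top1}), sequential continuity from $\tau_{\rho_1}$ to any target space coincides with continuity, and we conclude that $\nabla J$ is $\tau_{\rho_1}$-weak$^\ast$ continuous.

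For (ii), both directions rest on the equivalence furnished by Lemma \ref{lem:definiteness-DJ}: in the case $\tilde Y=V$, strict convexity of $J$ is equivalent to injectivity of $\nabla J$ on $V$. For the ``if'' direction, assume $J$ is strictly convex and suppose $y_n\conv{\topin}y$ and $y_n\conv{\topin}y'$. Lemma \ref{lem:tauIN_characterization} gives $\nabla J(y_n)\wstarto\nabla J(y)$ and $\nabla J(y_n)\wstarto\nabla J(y')$ in $\spann(Z)^\ast=V^\ast$; the weak$^\ast$ topology on a dual Banach space is Hausdorff, so $\nabla J(y)=\nabla J(y')$, and injectivity forces $y=y'$.

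For the converse I would argue by contraposition. If $J$ is not strictly convex, Lemma \ref{lem:definiteness-DJ} supplies $y\neq y'$ with $\nabla J(y)=\nabla J(y')$, and the first assertion of the same lemma then gives $D_J(y,y')=D_J(y',y)=0$. Consider the constant sequence $y_n:=y$. Clearly $y_n\conv{\topin}y$. But one also has $\rho_1(y',y_n)=D_J(y',y)=0$ and $\nabla J(y_n)=\nabla J(y)=\nabla J(y')$ constantly, which by the characterization in Lemma \ref{lem:tauIN_characterization} means $y_n\conv{\topin}y'$ as well, ruling out uniqueness of sequential limits. I do not anticipate any serious obstacle; the only point requiring attention is the symmetry $D_J(y,y')=0\Leftrightarrow D_J(y',y)=0$ at points where $\nabla J$ is single-valued, but this is precisely what the first statement of Lemma \ref{lem:definiteness-DJ} (together with its proof) provides.
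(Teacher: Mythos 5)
Your argument is correct and fills in exactly the chain the paper's one-line proof points to: Theorem~\ref{thm:tau_rho_has_CONV} to transfer from $\tau_{\rho_1}$ to $\topin$, the weak$^\ast$ characterization in Lemma~\ref{lem:tauIN_characterization}, sequentiality of $\tau_{\rho_1}$ to upgrade sequential continuity, and Lemma~\ref{lem:definiteness-DJ} (strict convexity $\Leftrightarrow$ injectivity of $\nabla J$) for both directions of (ii). The only cosmetic difference is that in (ii) you argue via Hausdorffness of the weak$^\ast$ topology where the paper invokes the definition of $\topin$ directly (i.e.\ $\rho_1(z,y)=\rho_1(z,y')$ for all $z$, then take $z=y$), but these are interchangeable.
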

\begin{proof}
  (i) is direct consequence of the previous lemma and (ii) is a direct
  consequence of Lemma~\ref{lem:definiteness-DJ} and the definition of
  $\topin$.
\end{proof}
So, if $J$ is strictly convex, in the setting of  Corollary \ref{cor:abl_schwachstern_stetig} it is necessary for Theorem \ref{thm:poeschl_flemming_example} to apply to Bregman discrepancies that $J$ has $\tau_{\rho_1}$-weak* continuous derivative,
since in this case the sequential convergence structure is given by $\tau_{\rho_1}$ (due to Remark \ref{rem:topological_if_unique_limits}
and Theorem \ref{thm:props_tau_on_Y}).
Moreover, in this case, a Tikhonov regularization scheme with discrepancy $\rho_1$ guarantees convergence 
to an exact solution given $J$ is strictly convex.

To complement Lemma~\ref{lem:tauIN_characterization}, we now analyze the variant $\rho_2(z,y) = D_J(y,z)$ which corresponds to
the Tikhonov functional $T_{\alpha,y}(x) = D_J(y,F(x)) + \alpha R(x)$.
Similarly to Lemma~\ref{lem:tauIN_characterization} we state the
following characterization of convergence with respect to $\topin$.
\begin{lemma}
  \label{lem:tauIN_characterization2}
  For all sequences $(y_n)$ in $\tilde{Y}$, $y\in\tilde{Y}$ the
  following does hold: $y_n\conv{\topin} y$ if and only if $\rho_2(y,
  y_n)\rightarrow 0$ and $\inner{\nabla J(y)-\nabla
    J(z)}{y-y_n}\rightarrow 0$ for all $z\in \tilde Y$
    \end{lemma}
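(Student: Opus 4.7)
The plan is to imitate the proof of Lemma~\ref{lem:tauIN_characterization} by deriving the analogous algebraic identity for $\rho_2$ and combining it with the sequential characterization of $\topin$-convergence given in Lemma~\ref{lem:prop_top4}. Note that since the subbase functionals $f_z(\,\cdot\,) = \rho_2(z,\,\cdot\,) = D_J(\,\cdot\,,z)$ require $\nabla J(z)$, it is natural (and consistent with the statement's quantifier) to take the generating index set of $\topin$ inside $\tilde Y$; this is what determines the family of $z$'s appearing below.

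First I would unpack the definition
\[
\rho_2(z, y) = J(y) - J(z) - \inner{\nabla J(z)}{y-z}
\]
and compute $\rho_2(z, y_n) - \rho_2(z, y)$. The $J(z)$ terms cancel, leaving $J(y_n) - J(y) - \inner{\nabla J(z)}{y_n - y}$. Adding and subtracting $\inner{\nabla J(y)}{y_n - y}$ splits this expression as $\rho_2(y, y_n)$ plus the linear remainder $\inner{\nabla J(y) - \nabla J(z)}{y_n - y}$, yielding the key identity
\[
\rho_2(z, y_n) - \rho_2(z, y) \;=\; \rho_2(y, y_n) \;-\; \inner{\nabla J(y) - \nabla J(z)}{y - y_n}
\]
valid for every $z\in\tilde Y$ and every $y,y_n\in\tilde Y$. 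This is the direct analogue of the identity used in Lemma~\ref{lem:tauIN_characterization}; only the roles of the two arguments of $D_J$ have been swapped, so the computation is essentially symmetric.

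With the identity in hand the equivalence is almost automatic. For the implication ``$\Leftarrow$'', assume $\rho_2(y,y_n)\to 0$ and $\inner{\nabla J(y)-\nabla J(z)}{y-y_n}\to 0$ for all $z\in\tilde Y$. The identity then gives $\rho_2(z,y_n)\to\rho_2(z,y)$ for every admissible $z$, i.e.\ $f_z(y_n)\to f_z(y)$, and Lemma~\ref{lem:prop_top4} concludes $y_n\conv{\topin} y$. For the implication ``$\Rightarrow$'', assume $y_n\conv{\topin} y$. Specialising $z=y$ in the identity makes the cross-term vanish (since $\nabla J(y)-\nabla J(y)=0$), and the $\topin$-convergence applied at this $z$ yields $\rho_2(y,y_n)\to \rho_2(y,y)=0$. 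Plugging this information back into the identity for arbitrary $z\in\tilde Y$, together with $\rho_2(z,y_n)\to\rho_2(z,y)$, shows $\inner{\nabla J(y)-\nabla J(z)}{y-y_n}\to 0$.

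The one delicate point is precisely the matching of quantifiers: the identity is only meaningful when $\nabla J(z)$ exists, and setting $z=y$ in the forward direction requires $y\in\tilde Y$, which is given. No duality estimates or weak$^*$ topology arguments are needed here, because the statement keeps the cross-term as an explicit numerical limit indexed by $z\in\tilde Y$ rather than reformulating it as $\nabla J(y_n)\wstarto \nabla J(y)$ in some dual (one could phrase such a corollary along the lines of Lemma~\ref{lem:tauIN_characterization}, but it is not what the lemma asks for). Hence the whole proof reduces to establishing the displayed identity and invoking Lemma~\ref{lem:prop_top4}.
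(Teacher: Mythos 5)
Your proof is correct and follows essentially the same route as the paper: derive the algebraic identity expressing $\rho_2(z,y_n)-\rho_2(z,y)$ as $\rho_2(y,y_n)$ plus a linear cross-term, then combine it with the sequential characterization of $\topin$-convergence, specializing $z=y$ for the forward direction. In fact your sign on the cross-term, $\rho_2(z,y_n)-\rho_2(z,y)=\rho_2(y,y_n)-\inner{\nabla J(y)-\nabla J(z)}{y-y_n}$, is the correct one (the paper's displayed identity carries a harmless sign slip that does not affect the limit argument), and your remark on reading the index set of $\topin$ as lying in $\tilde Y$ so that $\nabla J(z)$ exists is a sensible resolution of the paper's own $Z$ versus $\tilde Y$ quantifier mismatch.
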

\begin{proof}
  In this case the identity
  $ \rho_2(z,y_n)-\rho_2(z,y)=\rho_2(y,y_n)+\inner{\nabla J(y)-\nabla J(z)}{y-y_n}$
  does hold for all $z\in Z$.  
  So clearly $\rho_2(y,y_n)\rightarrow 0$ and $\inner{\nabla J(y)-\nabla
    J(z)}{y-y_n}\rightarrow 0$ imply $y_n\conv{\topin} y$.
    
  Conversely, let $y_n\conv{\topin} y$ hold. Then
  $\rho_2(y,y_n)\rightarrow 0$ and hence
  $0=\lim_{n\conv{}\infty}(\rho_2(z,y_n)-\rho_2(z,y)-\rho_2(y,y_n))=\lim_{n\conv{}\infty}\inner{\nabla
    J(y)-\nabla J(z)}{y-y_n}$.
\end{proof}

\subsection{The Kullback-Leibler divergence}
\label{sec:kullb-leibl-diverg}

If the noise is modeled by a Poisson process, the appropriate
discrepancy functional is the so-called Kullback-Leibler
divergence~\cite{resmerita2007kullbackleibler,benning2011errorestimates}. This
model fits into the context of Bregman distances and we provide the
setup as in~\cite{resmerita2007kullbackleibler}: Consider a bounded
set $\Omega$ in $\RR^n$ equipped with the Lebesgue measure $\mu$ and
set $V = L^1(\Omega)$. We define
\[
J(y) = \left\{
  \begin{array}{ll}
    \int_\Omega y\log(y)  - y\, d\mu & y\geq 0 \text{ a.e., }\ y\log(y)\in L^1(\Omega)\\
    \infty & \text{else.}
  \end{array}\right. 
\]
From~\cite{resmerita2005regbanspaces} we take the following facts:
The functional $J$ is strictly convex, we have $Z = \dom J = \{y\in L^1(\Omega)\ :\ y\geq 0 \text{ a.e., }\
y\log(y)\in L^1(\Omega)\}$ and it holds that
\[
\partial J(y) = \left\{
  \begin{array}{ll}
    \{\log(y)\}& y\geq \epsilon \text{ a.e. for some $\epsilon>0$,}\ y \in L^\infty(\Omega)\\
    \emptyset & \text{else.}
  \end{array}\right.
\]
Hence, we denote $\nabla J(y) = \log(y)$ and we have
\[
\tilde Y = \{y\in L^1(\Omega)\cap L^\infty(\Omega)\ :\ y\geq
\epsilon \text{ for some } \epsilon>0\}.
\]
The associated Bregman distance is also known as Kullback-Leibler
divergence
\[
D_{KL}(z,y) = \int_\Omega z\log\Big(\frac{z}{y}\Big) - z + y\, d\mu.
\]
From~\cite{borwein1991entropy} it is known that
\[
\norm[1]{z-y}^2\leq \Big(\frac23\norm[1]{y} +
\frac43\norm[1]{z}\Big)D_{KL}(z,y).
\]

\begin{lemma}
  It holds that $\overline{\spann(Z)} = L^1(\Omega)$ and consequently
  $\spann(Z)^*= L^\infty(\Omega)$.
\end{lemma}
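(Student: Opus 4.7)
The plan is to sandwich $\spann(Z)$ between a readily recognizable dense subspace of $L^1(\Omega)$ and $L^1(\Omega)$ itself, then invoke the standard duality $L^1(\Omega)^{\ast}=L^{\infty}(\Omega)$ together with a Hahn--Banach extension argument.

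First I would check that $L^{\infty}_{+}(\Omega):=\set{y\in L^{\infty}(\Omega)}{y\geq 0 \text{ a.e.}}$ is contained in $Z$. If $0\leq y\leq M$ a.e., then the function $t\mapsto t\log t$ is bounded on $[0,M]$ (its minimum is $-1/e$, its maximum on this interval is $M\log M$ for $M\geq 1$), so $y\log y\in L^{\infty}(\Omega)\subseteq L^{1}(\Omega)$ since $\Omega$ has finite Lebesgue measure. Hence $y\in\dom J=Z$.

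Next, any $f\in L^{\infty}(\Omega)$ decomposes as $f=f_{+}-f_{-}$ with $f_{\pm}\in L^{\infty}_{+}(\Omega)\subseteq Z$, so $L^{\infty}(\Omega)\subseteq\spann(Z)\subseteq L^{1}(\Omega)$. Because $\Omega$ has finite measure, $L^{\infty}(\Omega)$ is dense in $L^{1}(\Omega)$ (one can for instance approximate $f\in L^{1}(\Omega)$ by its truncations $f_{N}:=\sign(f)\min(\abs{f},N)$ and apply dominated convergence to see $\norm[1]{f-f_{N}}\to 0$). Taking closures gives $\overline{\spann(Z)}=L^{1}(\Omega)$.

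For the consequence on the dual, $\spann(Z)$ inherits its norm from $L^{1}(\Omega)$. Every element of $L^{\infty}(\Omega)$ defines a continuous linear functional on $\spann(Z)$ by restriction, and conversely, by the Hahn--Banach theorem any continuous linear functional on $\spann(Z)$ extends to a continuous linear functional on $L^{1}(\Omega)$; by density of $\spann(Z)$ this extension is unique. Using the identification $L^{1}(\Omega)^{\ast}=L^{\infty}(\Omega)$ we conclude $\spann(Z)^{\ast}=L^{\infty}(\Omega)$. No step is really an obstacle; the only care needed is to use the finite measure of $\Omega$ to guarantee both $L^{\infty}(\Omega)\subseteq Z$ and the density $\overline{L^{\infty}(\Omega)}^{\,\norm[1]{\,\cdot\,}}=L^{1}(\Omega)$.
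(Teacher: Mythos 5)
Your proof is correct and follows essentially the same route as the paper: approximate an arbitrary $L^1$ function by bounded truncations lying in $\spann(Z)$ (the paper builds explicit positive/negative truncations $y^\epsilon_\pm$ bounded away from zero, while you note more directly that all of $L^\infty_+(\Omega)\subseteq Z$ and invoke the standard density of $L^\infty$ in $L^1$ over a finite measure space), then identify the dual via continuous extension to the closure. Both arguments are sound; yours is, if anything, slightly cleaner in separating the membership and density steps.
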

\begin{proof}
  Consider $y\in L^1(\Omega)$ and $\epsilon>0$ and define
  \[
  y^\epsilon_+(x) = \left\{
    \begin{array}{ll}
      \frac1\epsilon & y(x)\geq \frac1\epsilon\\
      y(x) & \epsilon<y(x)<\frac1\epsilon\\
      \epsilon & y(x)\leq\epsilon
    \end{array}
  \right., \qquad y^\epsilon_-(x) = \left\{
    \begin{array}{ll}
      -\frac1\epsilon & y(x)\leq -\frac1\epsilon\\
      y(x) & -\frac1\epsilon<y(x)<-\epsilon\\
      -\epsilon & y(x)\geq-\epsilon
    \end{array}
  \right. .
  \]
  Then it holds that $y^\epsilon_+,y^\epsilon_-\in Z$ and
  $(y^\epsilon_+ + y^\epsilon_-)\to y$ in $L^1(\Omega)$. Since every
  continuous linear functional on $\spann(Z)$ can be extended
  continuously to $\overline{\spann(Z)} = L^1(\Omega)$, we conclude
  that $\text{span}(Z)^* = L^\infty(\Omega)$.
\end{proof}

First we look at $\rho_1$ as in Lemma~\ref{lem:tauIN_characterization}: Here
$\rho_1(z,y) = D_{KL}(z,y)$, i.e., the measured data is in the second
argument of the Kullback-Leibler divergence as, e.g.,
in~\cite{benning2011errorestimates,resmerita2007kullbackleibler}. We
deduce directly from Lemma~\ref{lem:tauIN_characterization}: A
sequence $(y_n)$ converges in $\topin$ to $y$ if and only if
\[
\rho_1(y,y_n)  = D_{KL}(y,y_n)\to 0\quad\text{and}\quad \log(y_n)\wstarto \log(y)\
\text{in}\ L^\infty(\Omega).
\]
We can describe this notion of convergence in more familiar terms:
\begin{theorem}
  In the case of $\rho_1(z,y) = D_{KL}(z,y)$ and $\topin$ defined
  by Definition~\ref{def:taurho-tauin} it holds that
  \[
  y_n\conv{\topin}y \iff \left\{
    \begin{array}{cl}
      y_n \to y & \text{in}\ L^1(\Omega)\\
      \log(y_n)\wstarto \log(y)\ &   \text{in}\ L^\infty(\Omega).
    \end{array}
  \right.
  \]
\end{theorem}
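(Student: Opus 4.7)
The plan is to reduce the claim to Lemma~\ref{lem:tauIN_characterization} applied to the Kullback-Leibler setting. Since $\nabla J(y) = \log(y)$ and $\spann(Z)^* = L^\infty(\Omega)$, that lemma already gives
\[
y_n\conv{\topin}y \iff D_{KL}(y,y_n)\to 0 \ \text{ and }\ \log(y_n)\wstarto\log(y) \text{ in }L^\infty(\Omega).
\]
So the work reduces to showing that, \emph{under} the standing weak* convergence $\log(y_n)\wstarto\log(y)$, the condition $D_{KL}(y,y_n)\to 0$ is equivalent to $y_n\to y$ in $L^1(\Omega)$. The key identity to keep in view throughout is
\[
D_{KL}(y,y_n) = \int_\Omega y\log y \,d\mu - \int_\Omega y\log(y_n)\,d\mu - \int_\Omega y\,d\mu + \int_\Omega y_n\,d\mu,
\]
together with the fact that since $y\in L^1(\Omega)$, the weak* convergence of $\log(y_n)$ implies $\int_\Omega y\log(y_n)\,d\mu \to \int_\Omega y\log(y)\,d\mu$.

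For the ``$\Leftarrow$'' direction, assume $y_n\to y$ in $L^1(\Omega)$. Then $\int_\Omega y_n\,d\mu \to \int_\Omega y\,d\mu$, and combining this with the convergence $\int_\Omega y\log(y_n)\,d\mu\to\int_\Omega y\log(y)\,d\mu$ just noted, the identity above collapses to $D_{KL}(y,y_n)\to 0$.

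For the ``$\Rightarrow$'' direction, assume $D_{KL}(y,y_n)\to 0$. Rearranging the identity yields
\[
\int_\Omega y_n\,d\mu = D_{KL}(y,y_n) - \int_\Omega y\log y\,d\mu + \int_\Omega y\log(y_n)\,d\mu + \int_\Omega y\,d\mu,
\]
and the right-hand side converges to $\int_\Omega y\,d\mu$, so $\|y_n\|_1=\int_\Omega y_n\,d\mu\to\|y\|_1$ (recall $y_n\geq 0$). In particular the $L^1$-norms are bounded. Now invoke Borwein's inequality
\[
\|y_n-y\|_1^2 \leq \Bigl(\tfrac{2}{3}\|y\|_1+\tfrac{4}{3}\|y_n\|_1\Bigr)D_{KL}(y,y_n),
\]
whose right-hand side tends to zero, giving $y_n\to y$ in $L^1(\Omega)$.

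The main obstacle is the ``$\Rightarrow$'' step, because Borwein's inequality only produces $L^1$ convergence if the norms $\|y_n\|_1$ are controlled. The critical observation that unblocks this is precisely that the weak* hypothesis on $\log(y_n)$ supplied by Lemma~\ref{lem:tauIN_characterization} lets one pass to the limit in the term $\int y\log(y_n)\,d\mu$ (using $y\in L^1(\Omega)$ as a test function), thereby converting $D_{KL}(y,y_n)\to 0$ into $\|y_n\|_1\to\|y\|_1$. Without the weak* convergence this deduction would fail, which also explains why the lemma characterises $\topin$ by \emph{both} conditions rather than by $D_{KL}(y,y_n)\to 0$ alone.
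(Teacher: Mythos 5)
Your proof is correct and follows essentially the same route as the paper: reduce to Lemma~\ref{lem:tauIN_characterization} (with $\nabla J=\log$ and $\spann(Z)^*=L^\infty(\Omega)$), use the splitting $D_{KL}(y,y_n)=\int_\Omega y(\log y-\log y_n)\,d\mu+\int_\Omega (y-y_n)\,d\mu$ for the ``$\Leftarrow$'' direction, and the implication $D_{KL}(y,y_n)\to 0\Rightarrow\norm[1]{y_n-y}\to 0$ for ``$\Rightarrow$''. The only difference is that the paper simply cites this last implication, whereas you rederive it from Borwein's inequality using the weak* hypothesis to bound $\norm[1]{y_n}$; this is fine (apart from a harmless transposition of the constants $\frac{2}{3}$ and $\frac{4}{3}$), though the weak* input is not actually needed there, since with $t=\norm[1]{y_n-y}$ the inequality yields $t^2\leq\bigl(\frac{2}{3}t+2\norm[1]{y}\bigr)D_{KL}(y,y_n)$, which already forces $t\to 0$.
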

\begin{proof}
  The implication ``$\Rightarrow$'' follows from
  Lemma~\ref{lem:tauIN_characterization} and the fact that
  $D_{KL}(y,y_n)\to 0\Rightarrow \norm[1]{y_n-y}\to 0$
  (see\cite{resmerita2005regbanspaces}). For the converse direction,
  observe that
  \[
  D_{KL}(y,y_n) = \int_\Omega y(\log(y) - \log(y_n))d\mu + \int_\Omega
  y-y_n d\mu
  \]
  and that both terms on the right hand side converge due to the
  assumptions (and the implicit non-negativity assumption).
\end{proof}
Loosely speaking, one can interpret the assumption that $\log(y_n)$
converges weakly$^*$ in $L^\infty(\Omega)$ as a condition that $y_n$
is not allowed to converge to zero on a non-null set
which seems to be a natural condition
in this context.

In view of Theorem~\ref{thm:poeschl_flemming_example} we can
conclude the following: If one aims at Tikhonov regularizing schemes
with Kullback-Leibler divergence $\rho_1(F(x),y)$ and wants to apply
Theorem~\ref{thm:poeschl_flemming_example}, then the appropriate
model for ``data $y^\delta$ converging to noiseless data $y$'' is
given by ``strong convergence in $L^1$ plus weak$^*$ convergence in
$L^\infty$''.

Second, we look at the case of $\rho_2$ as in
Lemma~\ref{lem:tauIN_characterization2}: Here $\rho_2(z,y) =
D_{KL}(y,z)$, i.e., the data is in the first argument of the
Kullback-Leibler divergence as, e.g.,
in~\cite{luke2012linconvapprox}. We conclude directly from
Lemma~\ref{lem:tauIN_characterization2} that a sequence $(y_n)$
converges in $\topin$ to $y$ if and only if
\begin{eqnarray}
  \label{eq:tauin_KL2_1}
  D_{KL}(y_n,y)\to 0\\
  \label{eq:tauin_KL2_2}
  \text{and}\ \int_\Omega
  (\log(y) - \log(z))(y-y_n)d\mu \to 0\ \text{for all}\ z\in \tilde Y.
\end{eqnarray}
In fact the condition in the second line is precisely weak
convergence in $L^1(\Omega)$: Indeed for any $w\in L^\infty(\Omega)$
and $y\in \tilde Y$ we can define $z\in \tilde Y$ via
$z=\exp(w-\log(y))$ and see that
\[
\int_\Omega(\log(y) - \log(z))(y-y_n)d\mu = \int_\Omega w(y-y_n)d\mu 
\]
and hence, the condition is indeed weak convergence in $L^1(\Omega)$.
Since, as already noticed, $D_{KL}(y_y,y)\to 0$ implies that $y_n\to
y$ in $L^1(\Omega)$ strongly, we see that~(\ref{eq:tauin_KL2_1})
implies~(\ref{eq:tauin_KL2_2}) and conclude:
\begin{theorem}
  In the case $\rho_2(z,y) = D_{KL}(y,z)$ and $\topin$ defined by
  Definition~\ref{def:taurho-tauin} it holds that
  \[
  y_n\conv{\topin} y\iff D_{KL}(y_n,y)\to 0.
  \]
\end{theorem}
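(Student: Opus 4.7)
My plan is to piece the theorem together from Lemma~\ref{lem:tauIN_characterization2}, the Pinsker-type inequality $\norm[1]{z-y}^2\leq \bigl(\tfrac{2}{3}\norm[1]{y}+\tfrac{4}{3}\norm[1]{z}\bigr)D_{KL}(z,y)$ quoted from \cite{borwein1991entropy}, and the reparametrization argument that identifies the second condition in Lemma~\ref{lem:tauIN_characterization2} with weak $L^1$-convergence. Since $\rho_2(y,y_n)=D_J(y_n,y)=D_{KL}(y_n,y)$, the forward implication is immediate: $y_n\conv{\topin}y$ in particular requires $\rho_2(y,y_n)\to 0$.

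For the converse, assume $D_{KL}(y_n,y)\to 0$. First I would show $y_n\to y$ strongly in $L^1(\Omega)$. Plugging $z=y_n$ into the Pinsker-type inequality gives, with $a_n:=\norm[1]{y_n-y}$ and $c_n:=D_{KL}(y_n,y)$, the bound $a_n^2\leq\bigl(2\norm[1]{y}+\tfrac{4}{3}a_n\bigr)c_n$, where I have used $\norm[1]{y_n}\leq\norm[1]{y}+a_n$. Since $c_n\to 0$, the sequence $(a_n)$ must stay bounded (otherwise the left-hand side grows like $a_n^2$ while the right-hand side grows only like $a_n c_n$), whence $a_n^2\leq Cc_n\to 0$ and thus $y_n\to y$ in $L^1$. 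In particular $y_n\weakto y$ in $L^1(\Omega)$.

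Next I would rewrite the second condition of Lemma~\ref{lem:tauIN_characterization2}. For any $w\in L^\infty(\Omega)$, set $z:=\exp(\log y-w)$; since $y\in\tilde{Y}$ has $y\geq\epsilon$ and $y\in L^\infty$, one has $\log y\in L^\infty$, hence $\log y-w\in L^\infty$, so $z\in L^\infty\cap L^1$ (recalling $\Omega$ bounded) and $z\geq\exp(-\norm[\infty]{\log y-w})>0$, i.e. $z\in\tilde{Y}$. With this $z$ we have $\log y-\log z=w$, so the identity
\[
\int_\Omega(\log y-\log z)(y-y_n)\,d\mu=\int_\Omega w(y-y_n)\,d\mu
\]
shows that the family of integrals $\int(\log y-\log z)(y-y_n)\,d\mu$ for $z\in\tilde{Y}$ runs through all pairings with $L^\infty$-functions. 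By the already-established weak $L^1$-convergence $y_n\weakto y$ each of these pairings tends to $0$, so both conditions of Lemma~\ref{lem:tauIN_characterization2} hold and $y_n\conv{\topin}y$.

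The only genuinely non-trivial step is the passage $D_{KL}(y_n,y)\to 0\Rightarrow y_n\to y$ in $L^1$, since the paper has so far only recorded the analogous implication with the arguments swapped (i.e.~$D_{KL}(y,y_n)\to 0\Rightarrow y_n\to y$ in $L^1$). Everything else is bookkeeping on top of Lemma~\ref{lem:tauIN_characterization2} and the reparametrization trick, which is essentially already executed in the text preceding the theorem.
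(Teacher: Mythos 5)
Your proof is correct and follows essentially the same route as the paper: reduce to the two conditions of Lemma~\ref{lem:tauIN_characterization2}, identify the second with weak $L^1$-convergence via the reparametrization $z=\exp(\log y - w)$, and observe that the first condition already forces strong (hence weak) $L^1$-convergence through the Borwein--Lewis inequality. You are in fact slightly more careful than the paper on that last step: the text only cites the swapped-argument implication $D_{KL}(y,y_n)\to 0\Rightarrow y_n\to y$ and omits the boundedness argument for $\norm[1]{y_n}$ that your version supplies.
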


\begin{remark}
  Note that the convergence in $\topin$ (i.e. $D_{KL}(y_n,y)\to 0$) is
  stronger than strong convergence in $L^1(\Omega)$, even if
  all $y_n$ and $y$ are uniformly bounded away from zero. To see this
  consider the following counterexample: Let $\Omega = [0,1]$,
  $\epsilon>0$ and define $y\equiv\epsilon$. Now define
  \[
  y_n(x) = \left\{
    \begin{array}{ll}
      n & \text{if}\ 0\leq x \leq (n\log(n))^{-1}\\
      \epsilon & \text{if}\  x>(n\log(n))^{-1}
    \end{array}\right.
  \]
  (note that $y,y_n\in \tilde Y$). Then it holds that
  $\norm[1]{y_n-y}\to 0$ but
  \begin{eqnarray*}
    D_{KL}(y_n,y) &=& \int_0^1 y_n(x)\log\Big(\frac{y_n(x)}{y(x)}\Big) - y_n(x) + y(x)dx\\
    &=& \int_0^{(n\log(n))^{-1}} n\log(n/\epsilon) - n + \epsilon dx\\
    &=& \frac{n\log(n/\epsilon) - n + \epsilon}{n\log(n)}\\
    &\to&  1
  \end{eqnarray*}
\end{remark}

As a final remark on the Kullback-Leibler divergence, we note that the
above discussion on topologies deduced from the Kullback-Leibler
divergence when introduced as Bregman distance and considered in
$L^1(\Omega)$, gives another motivation for the use of a positive
``baseline'' when working with Poisson noise,
i.e. the measured data (and hence, also the regularized quantities)
are shifted away from zero by adding a small positive constant as,
e.g., in~\cite{werner2012tikhonovpoisson}.

\section{Conclusion}

We examined variational regularization in a quite general setting and
started a study on necessary conditions for variational schemes to be
regularizing. Although it seems like little can be said about
necessary conditions in general we obtained several results in this
direction. Especially, we tried to clarify the relations between the
different players in the data space, e.g.~the convergence structure,
the topology and the discrepancy functional. Here we started from a
list of conditions which is known to guarantee regularizing properties
and deduced necessary conditions for the topologies and the
discrepancy functional. For Bregman
discrepancies we illustrated that our results imply necessary
conditions for the continuity of the derivative of the functional
which induces the Bregman distance and pointed out structural
difference when the measured data is in the first or second argument
of the Bregman distance, respectively. In the particular case of the
Kullback-Leibler divergence, we also characterized convergence in the
natural topology $\topin$ for both cases.

Although our results are fairly abstract, they are first steps towards
the analysis of necessary conditions which can be used to figure out
essential limitations of variational schemes.  Next steps could be to
analyze the other ingredients of a variational scheme, namely the
solution space $X$, its topology, the regularization functional and of
course, the operator. Other directions for future research are to
consider special classes of discrepancy functionals with additional
structure and to extend the analysis to Morozov and Ivanov regularization.

\section*{References}

\bibliographystyle{plain}
\bibliography{literature}
\end{document}